\documentclass[10pt,a4paper]{amsart}
\usepackage{amssymb,mathtools}
\usepackage[margin=1in]{geometry}
\usepackage{extarrows}
\usepackage[all]{xy}
\usepackage{booktabs,tabularx,array,enumitem,microtype}
\usepackage[hidelinks]{hyperref}
\setlist{nosep,leftmargin=2.2em}
\allowdisplaybreaks[2]
\theoremstyle{plain}
\newtheorem{theorem}{Theorem}[section]
\newtheorem{lemma}[theorem]{Lemma}
\newtheorem{proposition}[theorem]{Proposition}
\newtheorem{corollary}[theorem]{Corollary}
\theoremstyle{definition}
\newtheorem{definition}[theorem]{Definition}
\newtheorem{example}[theorem]{Example}
\newtheorem{conjecture}[theorem]{Conjecture}
\newtheorem{problem}[theorem]{Problem}
\theoremstyle{remark}
\newtheorem{remark}[theorem]{Remark}

\begin{document}

\title[Gabriel--Zisman Localizations and (Co)products]{Gabriel--Zisman Localizations, Products, Coproducts, and Product Categories}

\author{Chencheng Zhang}
\address{School of Mathematical Sciences, Shanghai Jiao Tong University, Shanghai 200240, P. R. China}
\email{zhangchencheng@sjtu.edu.cn}
\subjclass[2020]{Primary 18E35; Secondary 18A30, 18E05, 18N40}
\keywords{Gabriel--Zisman localization, product, coproduct, additive category, product category, calculus of fractions, model structure}
\thanks{The author was supported by the National Natural Science Foundation of China (No.~12131015).}
\begin{abstract}
  This paper studies (co)products and additive structures under Gabriel--Zisman localization $Q:\mathcal A\longrightarrow\mathcal A[S^{-1}]$.
  Examples show that localization may destroy existing (co)products or fail to preserve (co)products that remain, even when $Q$ is additive.
  For a set-indexed family $(\mathcal A_i,S_i)_{i\in I}$, necessary and sufficient conditions are given for the canonical functor $\big(\prod_{i\in I}\mathcal A_i\big)\big[\big(\prod_{i\in I}S_i\big)^{-1}\big]\longrightarrow\prod_{i\in I}\mathcal A_i[S_i^{-1}]$ to be full or faithful.
  These criteria yield compatibility with $I$-indexed (co)products for localizations admitting a calculus of fractions or arising from model categories, provided the relevant (co)products exist and the designated classes of morphisms are closed under them.
\end{abstract}

\maketitle

\section{Introduction}\label{sec:intro}
Given a category $\mathcal A$ and a class of morphisms $S\subseteq\mathsf{Mor}(\mathcal A)$, the Gabriel--Zisman localization $Q:\mathcal A\longrightarrow\mathcal A[S^{-1}]$ turns the morphisms in $S$ into isomorphisms, and every functor that sends $S$ to isomorphisms factors uniquely through $Q$ \cite[Chapter~I, \S~1, pp.~6--7]{GabrielZisman}.
This paper studies the relationships between Gabriel--Zisman localization and products, coproducts, and additive structures, with particular attention to two questions: if $\mathcal A$ has (co)products of a certain kind, does the localized category $\mathcal A[S^{-1}]$ still have them; if both $\mathcal{A}$ and $\mathcal A[S^{-1}]$ have (co)products of a certain kind, does the localization functor preserve them?

\begin{remark}
  All indexing sets and indexing categories in this paper are assumed to be small.
  If sizes need to be recorded, one may fix Grothendieck universes $\mathcal U\in\mathcal V$, use ``small'' to mean $\mathcal U$-small, and form the required categories and localizations in $\mathcal V$; universe decorations will be omitted below.
  For conventions concerning universes, see \cite[Expos\'e~I, \S~0 and Appendix, \S\S~1--5]{SGA4} and \cite[pp.~320--321, ``Conventions ensemblistes'']{CisinskiDerivable}; for size issues in localization, see \cite[Chapter~I, \S~1.3; Chapter~II, \S\S~2.1--2.2]{VerdierDerived} and \cite[pp.~425--426]{NandaUniverseFractions}.
\end{remark}

When $S$ contains all identity morphisms, $Q$ preserves finite (co)products whenever $\mathcal A$ has finite (co)products and $S$ is closed under finite (co)products.
Maltsiniotis proves the binary (co)product statement in \cite[Proposition~2.1.8, p.~75]{MaltsiniotisGrothendieck}; Tronin proves the finite-product statement in \cite[Theorem~1.3, p.~1526]{TroninProducts}, and the finite-coproduct statement follows by duality.
See Proposition~\ref{prop:finite-structure-localization} below for the precise form used here.
The proof descends the (co)product--diagonal adjunction to the localized categories, using the compatibility of localization with finite product categories.
To generalize this result to $I$-indexed (co)products,
one considers a family of nonempty categories $(\mathcal A_i)_{i\in I}$ and classes $S_i\subseteq\mathsf{Mor}(\mathcal A_i)$ containing all identity morphisms,
and therefore studies when the canonical functor is fully faithful:
\[
  \Theta:
  \bigg(\prod_{i\in I}\mathcal A_i\bigg)
  \bigg[\bigg(\prod_{i\in I}S_i\bigg)^{-1}\bigg]
  \longrightarrow
  \prod_{i\in I}\mathcal A_i[S_i^{-1}].
\]
It is known that $\Theta$ is an isomorphism of categories when $I$ is finite; see \cite[Lemma~2.1.7, pp.~74--75]{MaltsiniotisGrothendieck}, \cite[Theorem~1.2, pp.~1524--1525]{TroninProducts}, and Lemma~\ref{lem:localization-binary-product}.
For arbitrary $I$, $\Theta$ remains bijective on objects, but it need not be full or faithful.

\medskip

To state the main theorems, the notation $\ell$ and $d$ are introduced.
For every $i\in I$, let $\overline{S_i}$ denote the class of formal inverse arrows of the arrows in $S_i$, and let $\langle\mathsf{Mor}(\mathcal A_i)\sqcup\overline{S_i}\rangle$ denote the collection of all finite compositions of arrows in $\mathsf{Mor}(\mathcal A_i)\sqcup\overline{S_i}$.
For a composition of arrows $\mathfrak P$ in this collection, let $|\mathfrak P|$ be the number of arrows in $\mathfrak P$, let $[\mathfrak P]$ be the morphism that it represents in the localization, and put
\[
  \ell(\mathfrak P)
  =\min\bigl\{|\mathfrak P'|\bigm|[\mathfrak P']=[\mathfrak P]\bigr\}.
\]
For $\mathfrak P,\mathfrak Q \in \langle\mathsf{Mor}(\mathcal A_i)\sqcup\overline{S_i}\rangle$ with $[\mathfrak P]=[\mathfrak Q]$, let $\delta(\mathfrak P,\mathfrak Q)$ be the least number of elementary rewriting steps required to rewrite $\mathfrak P$ as $\mathfrak Q$; the elementary rewritings are given by the relations in the Gabriel--Zisman construction displayed in \eqref{eq:gz-arrow-composition-relations}.
For $L\in\mathbb N$, define
\[
  d_i(L)=\sup\left\{
  \delta(\mathfrak P,\mathfrak Q)\ \middle|\
  [\mathfrak P]=[\mathfrak Q],\
  |\mathfrak P|,|\mathfrak Q|\leq L
  \right\}\in\mathbb N\cup\{\infty\},
\]
where the supremum of the empty set is taken to be $0$.

\medskip

\noindent\textbf{Theorem.} \ Given a small set $I$, a family of nonempty categories $(\mathcal A_i)_{i\in I}$, and classes of morphisms $S_i\subseteq\mathsf{Mor}(\mathcal A_i)$ containing all identity morphisms, consider the canonical functor induced by localization
\[
  \Theta:
  \bigg(\prod_{i\in I}\mathcal A_i\bigg)
  \bigg[\bigg(\prod_{i\in I}S_i\bigg)^{-1}\bigg]
  \longrightarrow
  \prod_{i\in I}\mathcal A_i[S_i^{-1}].
\]
\begin{enumerate}[label=\textup{(\arabic*)}]
  \item (Theorem~\ref{thm:product-canonical-fullness}) $\Theta$ is full if and only if there exist a finite subset $F\subseteq I$ and $N\in\mathbb N$ such that $\ell(\mathfrak P)\leq N$ for every $i \notin F$ and every $\mathfrak P\in\langle\mathsf{Mor}(\mathcal A_i)\sqcup\overline{S_i}\rangle$.
  \item (Theorem~\ref{thm:product-canonical-faithfulness}) $\Theta$ is faithful if and only if, for every $L\in\mathbb N$, there exist a finite subset $F_L\subseteq I$ and $D_L\in\mathbb N$ such that $d_i(L)\leq D_L$ for every $i \notin F_L$.
\end{enumerate}

\medskip

Moreover, when $(\mathcal{A}_i, S_i)_{i \in I}$ are all identical, one obtains the following theorem.

\medskip

\noindent\textbf{Theorem.}  (Theorem~\ref{cor:fraction-calculus-preservation}) Let $\mathcal{A}$ be a category, assume that the class of morphisms $S \subseteq \mathsf{Mor}(\mathcal{A})$ contains all identity morphisms, and let $Q : \mathcal{A} \longrightarrow \mathcal{A}[S^{-1}]$ be the localization.
Suppose that the category $\mathcal{A}$ and the class of morphisms $S$ satisfy the following two conditions:
\begin{align*}
  N = \sup \big\{\ell(\mathfrak P) \mid \mathfrak P\in \langle\mathsf{Mor}(\mathcal A)\sqcup\overline{S}\rangle\big\} < \infty, \\
  \sup\big\{ \delta(\mathfrak P,\mathfrak Q) \mid [\mathfrak P]=[\mathfrak Q], \ \mathfrak P, \mathfrak Q\in \langle\mathsf{Mor}(\mathcal A)\sqcup\overline{S}\rangle, \ |\mathfrak P|, |\mathfrak Q| \leq N+1\big\} < \infty.
\end{align*}
Then the following statements hold:
\begin{enumerate}[label=\textup{(\arabic*)}]
  \item If $\mathcal{A}$ has $I$-indexed coproducts and $S$ is closed under $I$-indexed coproducts, then $\mathcal{A}[S^{-1}]$ also has $I$-indexed coproducts and $Q$ preserves them;
  \item If $\mathcal{A}$ has $I$-indexed products and $S$ is closed under $I$-indexed products, then $\mathcal{A}[S^{-1}]$ also has $I$-indexed products and $Q$ preserves them.
\end{enumerate}

\medskip

In particular, localizations arising from a calculus of fractions or a model structure satisfy the two conditions in Theorem~\ref{cor:fraction-calculus-preservation}.
See Corollary~\ref{thm:fraction-calculus-application} and Theorem~\ref{thm:model-category-application} for the corresponding results.
The coproduct statement under a calculus of left fractions was proved in \cite[Proposition~3.5.1]{KrauseLocalizationTheory}.
For ordinary model categories, the equivalence of the comparison functor for discrete diagram categories is also recorded in \cite[Section~2.5, equation~(2.7) and footnote~14, pp.~41--42]{RiehlHomotopicalCategories}.
The proofs below recover these conclusions from the two quantitative criteria above and use Egger's weakened model-category axioms, which require only finite products and finite coproducts.

\medskip

The paper is organized as follows.
Section~\ref{sec:preliminaries} recalls basic properties of the Gabriel--Zisman localization $Q : \mathcal{A} \longrightarrow \mathcal{A}[S^{-1}]$ and sufficient conditions under which a localization functor preserves finite (co)products and direct sums.
Section~\ref{sec:finite-counterexamples} gives examples in which $\mathcal{A}$ has finite (co)products but $\mathcal{A}[S^{-1}]$ does not, and examples in which both $\mathcal{A}$ and $\mathcal{A}[S^{-1}]$ have finite (co)products but $Q$ does not preserve them.
Under the assumptions that $\mathcal{A}$ and $\mathcal{A}[S^{-1}]$ are additive and that $Q$ is additive, Section~\ref{sec:additive-infinite-counterexamples} gives, respectively, an example in which $\mathcal{A}$ has arbitrary (co)products but $\mathcal{A}[S^{-1}]$ does not, and an example in which both $\mathcal{A}$ and $\mathcal{A}[S^{-1}]$ have arbitrary (co)products but $Q$ does not preserve them.
Section~\ref{sec:product-localization} establishes the necessary and sufficient conditions for $\Theta$ to be full and faithful, together with the corresponding preservation theorem.
Section~\ref{sec:discrete-applications} applies these results to calculus of fractions and model structures.
Section~\ref{sec:conjectures-and-problems} discusses the additivity of localization functors and the localization of general functor categories, and poses several questions.

\section{Preliminaries}\label{sec:preliminaries}

\subsection{Categorical notation}

Let $\mathcal{A}$ be a category.
An object $\bot$ is called an initial object of $\mathcal{A}$ if, for every $X \in \mathsf{Ob}(\mathcal{A})$, the class of morphisms $\mathrm{Hom}_\mathcal{A}(\bot, X)$ is a singleton; dually, an object $\top$ is called a terminal object of $\mathcal{A}$ if, for every $X \in \mathsf{Ob}(\mathcal{A})$, the class of morphisms $\mathrm{Hom}_\mathcal{A}(X, \top)$ is a singleton.
Fix an indexing set $I$.
A coproduct of a family of objects $(X_i)_{i \in I}$, if it exists, consists of an object $\coprod_{i \in I} X_i$ and a family of structure morphisms $e_i : X_i \longrightarrow \coprod_{i \in I} X_i$ such that, for every $Y \in \mathsf{Ob}(\mathcal{A})$, there is an isomorphism
\begin{equation}\label{eq_coprod}
  \mathrm{Hom}_\mathcal{A}\Big(\coprod_{i \in I} X_i, Y\Big) \xlongrightarrow{\sim} \prod_{i \in I} \mathrm{Hom}_\mathcal{A}(X_i, Y),\qquad f \longmapsto (f \circ e_i)_{i \in I}.
\end{equation}
Similarly, a product of $(X_i)_{i \in I}$, if it exists, consists of an object $\prod_{i \in I} X_i$ and a family of structure morphisms $p_i : \prod_{i \in I} X_i \longrightarrow  X_i$ such that, for every $Z \in \mathsf{Ob}(\mathcal{A})$, there is an isomorphism
\begin{equation}\label{eq_prod}
  \mathrm{Hom}_\mathcal{A}\Big(Z, \prod_{i \in I} X_i\Big) \xlongrightarrow{\sim} \prod_{i \in I} \mathrm{Hom}_\mathcal{A}(Z, X_i),\qquad g \longmapsto (p_i \circ g)_{i \in I}.
\end{equation}
The isomorphisms \eqref{eq_coprod} and \eqref{eq_prod} are also called the universal properties of the coproduct and product, respectively.
By convention, $\coprod_{i \in \varnothing} X_i$ is an initial object and $\prod_{i \in \varnothing} X_i$ is a terminal object.

\medskip

A category $\mathcal{A}$ is called preadditive if, for every $X,Y,Z \in \mathsf{Ob}(\mathcal{A})$, $\mathrm{Hom}_\mathcal{A}(X,Y)$ is an abelian group and composition is additive in both variables:
\[
  \mathrm{Hom}_\mathcal{A}(Y,Z) \times \mathrm{Hom}_\mathcal{A}(X,Y) \longrightarrow \mathrm{Hom}_\mathcal{A}(X,Z),\qquad (g,f) \longmapsto g \circ f.
\]
A category $\mathcal{A}$ is called additive if it is preadditive and has finite biproducts.
Equivalently, $\mathcal A$ is preadditive, finite coproducts and finite products exist, and, for every finite family of objects $(X_i)_{i \in I}$, the canonical morphism $\coprod_{i \in I}X_i\longrightarrow\prod_{i \in I}X_i$ is an isomorphism.
The finitely indexed coproduct and product are then denoted by $\bigoplus$.

Let $\mathcal{A}$ and $\mathcal{B}$ be additive categories.
A functor $F : \mathcal{A} \longrightarrow \mathcal{B}$ is called additive if, for every $X,Y \in \mathsf{Ob}(\mathcal{A})$, the following map is a homomorphism of additive groups:
\[
  \mathrm{Hom}_\mathcal{A}(X,Y) \longrightarrow \mathrm{Hom}_\mathcal{B}(FX,FY),\qquad f \longmapsto Ff.
\]
Equivalently, for every $X,Y \in \mathsf{Ob}(\mathcal{A})$, the universal properties of the biproduct in \eqref{eq_coprod} and \eqref{eq_prod} induce an isomorphism $F(X \oplus Y) \cong FX \oplus FY$.

\subsection{Gabriel--Zisman localization and saturated classes}

Given a category $\mathcal{A}$ and a class of morphisms $S$, the construction of the Gabriel--Zisman localization is recalled from \cite[Chapter~I, \S~1, pp.~6--7]{GabrielZisman}.
Forget the identity and composition laws on $\mathsf{Mor} (\mathcal{A})$.
A morphism is then regarded merely as an arrow; the source and target of this arrow remain those of the original morphism.
Let $\overline S = \{\overline s : Y \longrightarrow X\mid s : X \longrightarrow Y\text{ belongs to }S\}$ be a family of new arrows.
Form the disjoint union of arrows $\mathsf{Mor} (\mathcal{A}) \sqcup \overline S$.
Arrows $g$ and $f$ are called composable if the source of $g$ is the target of $f$.
Write their composition as $g \ast f$.

\begin{remark}
  In particular, the composite $g \circ f$ of arrows $f,g\in\mathsf{Mor}(\mathcal A)$ gives a composition of arrows $g \ast f$ in $\mathsf{Mor} (\mathcal{A}) \sqcup \overline S$; however, $g \circ f$ is itself an arrow in $\mathsf{Mor} (\mathcal{A}) \sqcup \overline S$, and it is not equal to the composition of arrows $g \ast f$.
\end{remark}
\noindent Let $\langle \mathsf{Mor} (\mathcal{A}) \sqcup \overline S \rangle$ denote the class of all finite compositions of arrows in $\mathsf{Mor} (\mathcal{A}) \sqcup \overline S$.
Consider the equivalence relation on $\langle \mathsf{Mor} (\mathcal{A}) \sqcup \overline S \rangle$ generated by the following rules:
\begin{enumerate}[label=\textup{(\arabic*)}]
  \item if $f,g \in \mathsf{Mor}(\mathcal{A})$ and $g$ and $f$ are composable, then $g \circ f$ and $g \ast f$ are equivalent;
  \item for every arrow $a : M \longrightarrow N$, the three compositions of arrows $\operatorname{Id}_N \ast a$, $a \ast \operatorname{Id}_M$, and $a$ are equivalent;
  \item if $s : X \longrightarrow Y$ is an arrow in $S$, then $s \ast \overline s$ is equivalent to $\operatorname{Id}_Y$, and $\overline s \ast s$ is equivalent to $\operatorname{Id}_X$;
  \item if the compositions of arrows $\mathfrak P$ and $\mathfrak Q$ are equivalent, then $a \ast \mathfrak P \ast b$ and $a \ast \mathfrak Q \ast b$ are equivalent whenever these compositions of arrows are defined.
\end{enumerate}

\medskip

Let $\langle \mathsf{Mor} (\mathcal{A}) \sqcup \overline S \rangle /_\sim$ be the quotient of $\langle \mathsf{Mor} (\mathcal{A}) \sqcup \overline S \rangle$ by this equivalence relation.
One obtains the following description of $\mathcal{A}[S^{-1}]$: its class of objects is $\mathsf{Ob}(\mathcal{A})$; the class of morphisms $\mathrm{Hom}_{\mathcal{A}[S^{-1}]}(X, Y)$ is the subclass of $\langle \mathsf{Mor} (\mathcal{A}) \sqcup \overline S \rangle/_\sim$ consisting precisely of compositions of arrows from $X$ to $Y$; the identity morphism of $X \in \mathsf{Ob}(\mathcal{A}[S^{-1}])$ is induced by the arrow $\operatorname{Id}_X$; composition of morphisms is induced by $\ast$.

\medskip

By construction, a morphism in $\mathcal{A}[S^{-1}]$ is written as $[\mathfrak P]$, where $\mathfrak P \in \langle \mathsf{Mor} (\mathcal{A}) \sqcup \overline S \rangle$ is a composition of arrows and $[-]$ denotes the equivalence class under the relation above.

\subsection{Basic facts about localization}

The following basic facts about localization will be used below.

\begin{lemma}\label{lem:nullary-preservation}
  Let $\mathcal{A}$ be a category and $S \subseteq \mathsf{Mor}(\mathcal{A})$ a class of morphisms.
  Let $Q:\mathcal A\longrightarrow \mathcal A[S^{-1}]$ be the localization.
  \begin{enumerate}[label=\textup{(\arabic*)}]
    \item if $\mathcal{A}$ has an initial object $\bot$ of $\mathcal A$, then $Q\bot$ is an initial object of $\mathcal A[S^{-1}]$;
    \item if $\mathcal{A}$ has a terminal object $\top$ of $\mathcal A$, then $Q\top$ is a terminal object of $\mathcal A[S^{-1}]$;
    \item if $\mathcal{A}$ has a zero object $0$ of $\mathcal A$, then $Q0$ is a zero object of $\mathcal A[S^{-1}]$.
  \end{enumerate}
\end{lemma}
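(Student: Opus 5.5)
The plan is to prove (1) directly from the Gabriel--Zisman presentation, deduce (2) by duality, and combine the two for (3).

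For (1), fix $X \in \mathsf{Ob}(\mathcal A)=\mathsf{Ob}(\mathcal A[S^{-1}])$. Existence of a morphism $Q\bot \to X$ is immediate: the unique morphism $u_X:\bot\to X$ in $\mathcal A$ gives $[u_X]$. The real content is uniqueness. Every morphism $Q\bot\to X$ has the form $[\mathfrak P]$ with $\mathfrak P=a_n\ast\cdots\ast a_1$ a finite composition of arrows in $\mathsf{Mor}(\mathcal A)\sqcup\overline S$ starting at $\bot$. I will show that $[\mathfrak P]=[u_X]$ by induction on $|\mathfrak P|$.

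The base case $|\mathfrak P|=0$ is the identity, which equals $[u_\bot]=[\operatorname{Id}_\bot]$. For the inductive step, write $\mathfrak P=a\ast\mathfrak P'$ with $\mathfrak P':\bot\to Y$, so that $[\mathfrak P']=[u_Y]$ by induction. There are two cases.
\begin{itemize}
  \item If $a\in\mathsf{Mor}(\mathcal A)$, then $[a\ast u_Y]=[a\circ u_Y]=[u_X]$ by rule (1) and initiality in $\mathcal A$.
  \item If $a=\overline s$ with $s:X\to Y$ in $S$, then $u_Y=s\circ u_X$ in $\mathcal A$, since both are morphisms $\bot\to Y$. Hence
  \[
    [\overline s\ast u_Y]=[\overline s]\,[s]\,[u_X]=[u_X]
  \]
  by rules (1) and (3).
\end{itemize}
This second case, where a formal inverse occurs, is the only step that needs care, and initiality in $\mathcal A$ resolves it. So $\mathrm{Hom}_{\mathcal A[S^{-1}]}(Q\bot,X)$ is a singleton.

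For (2), the argument is dual. Alternatively, apply (1) to $\mathcal A^{\mathrm{op}}$ with the class $S^{\mathrm{op}}$, using the canonical identification $\mathcal A^{\mathrm{op}}[(S^{\mathrm{op}})^{-1}]\cong(\mathcal A[S^{-1}])^{\mathrm{op}}$, which holds because the generating relations are self-dual.

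For (3), a zero object is both initial and terminal, so $Q0$ is both initial and terminal by (1) and (2), and is therefore a zero object.
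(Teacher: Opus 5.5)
Your proof is correct and follows essentially the same route as the paper: both argue by induction along the composition of arrows, using initiality to get $u_Y=s\circ u_X$ so that $[\overline s\ast u_Y]=[u_X]$, then obtain (2) by duality and (3) by combining (1) and (2). The only cosmetic difference is that the paper first pads $\mathfrak P$ into the alternating form $f_n\ast\overline{s_n}\ast\cdots\ast\overline{s_1}\ast f_0$ (assuming $S$ contains all identities), whereas you peel off one arrow at a time, which makes that normalization unnecessary.
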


\begin{proof}
  Suppose that $\bot$ is an initial object.
  For every $A\in\mathsf{Ob}(\mathcal A)$, let $u_A:\bot\longrightarrow A$ denote the unique morphism.
  First, one proves that $\mathrm{Hom}_{\mathcal A[S^{-1}]}(Q\bot,QA)$ is a singleton.
  Take any $[\mathfrak P] \in \mathrm{Hom}_{\mathcal A[S^{-1}]}(Q\bot,QA)$, where $\mathfrak P \in \langle \mathsf{Mor} (\mathcal{A}) \sqcup \overline S \rangle$.
  One may assume that $S$ contains all identity morphisms.
  By formally inserting identity arrows, $\mathfrak P$ is equivalent to a composition of arrows of the form
  \[
    \mathfrak P' = f_n \ast \overline {s_n} \ast f_{n-1} \ast \overline{s_{n-1}} \ast \cdots \ast f_1 \ast \overline{s_1} \ast f_0,
  \]
  where $f_0$ is a morphism from the initial object, written as $f_0 = u_X : \bot \longrightarrow X$.
  For $s_1 : Y \longrightarrow X$ and $f_1 : Y \longrightarrow Z$, one has $s_1 \circ u_Y = u_X$ and $f_1 \circ u_Y = u_Z$.
  Hence $[f_1 \ast \overline{s_1} \ast f_0] = [u_Z]$.
  Inductively, $\mathfrak P'$ is equivalent to $u_A$.
  Thus $\mathrm{Hom}_{\mathcal A[S^{-1}]}(Q\bot,QA)$ is a singleton, so $Q \bot$ is initial.

  \medskip

  The argument for a terminal object is similar.
  Consequently, localization also preserves zero objects.
\end{proof}

\begin{lemma}\label{lem_loc_uni}
  Let $\mathcal{A}$ be a category and $S \subseteq \mathsf{Mor}(\mathcal{A})$ a class of morphisms.
  Then localization is unique up to an isomorphism of categories, rather than merely an equivalence of categories.
  Equivalently, if $Q_1:\mathcal A\longrightarrow \mathcal{L}_1$ and $Q_2:\mathcal A\longrightarrow \mathcal{L}_2$ are both localizations defined by the universal property, then there is an isomorphism $\mathcal{L}_1 \cong \mathcal{L}_2$.
\end{lemma}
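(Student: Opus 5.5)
The plan is the standard Yoneda-style argument for universal objects: the universal property pins $\mathcal L_1$ down up to a unique comparison, and one checks that this comparison is invertible on the nose. The localization is defined by the strict universal property: a functor $Q:\mathcal A\longrightarrow\mathcal L$ sends every morphism of $S$ to an isomorphism. Moreover, every functor $G:\mathcal A\longrightarrow\mathcal C$ that inverts $S$ factors as $G=\overline G\circ Q$ for a \emph{unique} functor $\overline G:\mathcal L\longrightarrow\mathcal C$. The key point is that this is an equality of functors, not merely a natural isomorphism. So the argument will produce inverse functors, not just quasi-inverses.

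First, apply the universal property of $Q_1$ to the functor $Q_2$, which inverts $S$. This gives a unique functor $\Phi:\mathcal L_1\longrightarrow\mathcal L_2$ with $\Phi\circ Q_1=Q_2$. Symmetrically, the universal property of $Q_2$ applied to $Q_1$ gives a unique $\Psi:\mathcal L_2\longrightarrow\mathcal L_1$ with $\Psi\circ Q_2=Q_1$.

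Next, compose the two. Then $(\Psi\circ\Phi)\circ Q_1=\Psi\circ Q_2=Q_1$, and also $\operatorname{Id}_{\mathcal L_1}\circ Q_1=Q_1$. Both $\Psi\circ\Phi$ and $\operatorname{Id}_{\mathcal L_1}$ are therefore factorizations of the $S$-inverting functor $Q_1$ through $Q_1$. Uniqueness in the universal property of $Q_1$ forces $\Psi\circ\Phi=\operatorname{Id}_{\mathcal L_1}$. The same argument with the roles exchanged gives $\Phi\circ\Psi=\operatorname{Id}_{\mathcal L_2}$. Hence $\Phi$ is an isomorphism of categories, compatible with the localization functors.

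The only point needing care, and the one I expect to be the main obstacle, is being explicit that the universal property in the Gabriel--Zisman sense is strict: it gives equality $G=\overline G\circ Q$ and uniqueness of $\overline G$ on the nose. This is exactly what the explicit construction recalled above provides, since a functor out of $\langle\mathsf{Mor}(\mathcal A)\sqcup\overline S\rangle/_\sim$ is determined by its values on objects, on arrows of $\mathsf{Mor}(\mathcal A)$, and on the formal inverses $\overline s$. If one instead only had the $2$-categorical universal property (uniqueness up to unique natural isomorphism), the same argument would yield only an equivalence. So I would note that the statement refers to the strict version, for which $\mathcal A[S^{-1}]$ is a representative; the size issues are handled by the universe conventions of the earlier Remark.
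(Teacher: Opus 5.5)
Your proposal is correct and matches the paper's proof: the paper likewise factors $Q_2$ through $Q_1$ and $Q_1$ through $Q_2$ (its $\widetilde{Q_2}$ and $\overline{Q_1}$ are your $\Phi$ and $\Psi$), then uses uniqueness in the universal property to conclude that both composites are identity functors. Your added remark is a useful clarification that the paper leaves implicit: it is the strictness of the universal property (equality $G=\overline G\circ Q$ with $\overline G$ unique) that yields an isomorphism rather than only an equivalence.
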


\begin{proof}
  Let $\widetilde {Q_2} : \mathcal{L}_1 \longrightarrow \mathcal{L}_2$ be the functor induced by the universal property of the localization $Q_1$, and let $\overline{Q_1} : \mathcal{L}_2 \longrightarrow \mathcal{L}_1$ be the functor induced by the universal property of $Q_2$.
  By construction, $Q_1 = \overline{Q_1} \circ \widetilde{Q_2} \circ Q_1$ and $Q_2 = \widetilde{Q_2} \circ \overline{Q_1} \circ Q_2$.
  The universal property of localization gives $\operatorname{Id}_{\mathcal{L}_1} = \overline{Q_1} \circ \widetilde{Q_2}$ and $\operatorname{Id}_{\mathcal{L}_2} = \widetilde{Q_2} \circ \overline{Q_1}$.
  This constructs an isomorphism between the categories $\mathcal{L}_1$ and $\mathcal{L}_2$.
\end{proof}

\begin{lemma}\label{lem:localization-binary-product}
  Let $I$ be a finite indexing set.
  For every $i \in I$, let $\mathcal{A}_i$ be a category and $S_i \subseteq \mathsf{Mor}(\mathcal{A}_i)$ a class of morphisms.
  Assume that $S_i$ contains all identity morphisms.
  Let $Q_i : \mathcal{A}_i \longrightarrow \mathcal{A}_i[S_i^{-1}]$ be the localization.
  Then the following functor induced by the universal property of localization is an isomorphism of categories:
  \[
    \widetilde {\prod_{i \in I} Q_i} :
    \bigg(\prod_{i \in I} \mathcal{A}_i \bigg)\bigg[\big(\prod_{i \in I}S_i\big)^{-1}\bigg]
    \longrightarrow\prod_{i \in I}\mathcal{A}_i[S_i^{-1}].
  \]
\end{lemma}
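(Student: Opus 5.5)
By induction on $|I|$ it suffices to treat $I=\varnothing$ and $I=\{1,2\}$. Indeed, for $I=J\sqcup\{k\}$ one has $\prod_{I}\mathcal A_i=(\prod_J\mathcal A_i)\times\mathcal A_k$ and $\prod_I S_i=(\prod_J S_i)\times S_k$. The canonical functor then factors as the binary comparison followed by the comparison for $J$ times the identity of $\mathcal A_k[S_k^{-1}]$. The binary case applies because $\prod_J S_i$ again contains all identities. For $I=\varnothing$, both sides are the terminal category $\mathbf 1$, and the localization of $\mathbf 1$ at its identity is $\mathbf 1$.

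For the binary case, write $\mathcal A=\mathcal A_1$, $\mathcal B=\mathcal A_2$, $S=S_1$, $T=S_2$, and $\Theta:(\mathcal A\times\mathcal B)[(S\times T)^{-1}]\to\mathcal A[S^{-1}]\times\mathcal B[T^{-1}]$. The plan is to build an inverse functor $\Psi$. I would rather not verify directly that $\mathcal A[S^{-1}]\times\mathcal B[T^{-1}]$ satisfies the universal property, since that forces a comparison of zigzags componentwise. Instead I would use the universal property one variable at a time.

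Fix an object $B\in\mathcal B$. The functor $\mathcal A\to\mathcal A\times\mathcal B\to(\mathcal A\times\mathcal B)[(S\times T)^{-1}]$, $A\mapsto(A,B)$, sends $s\in S$ to $(s,\operatorname{Id}_B)\in S\times T$. This is where the hypothesis that identities lie in $T$ is used. So the functor factors uniquely through $\mathcal A[S^{-1}]$, giving a functor $\Psi_B:\mathcal A[S^{-1}]\to(\mathcal A\times\mathcal B)[(S\times T)^{-1}]$. Symmetrically, each $A$ gives a functor $\Psi^A:\mathcal B[T^{-1}]\to(\mathcal A\times\mathcal B)[(S\times T)^{-1}]$.

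To assemble these into a functor on the product, I would use the standard criterion: a functor out of a product of categories amounts to two families of partial functors agreeing on objects and satisfying the interchange law. That is, for $\alpha:A\to A'$ in $\mathcal A[S^{-1}]$ and $\beta:B\to B'$ in $\mathcal B[T^{-1}]$ one needs
\[
\Psi^{A'}(\beta)\circ\Psi_B(\alpha)=\Psi_{B'}(\alpha)\circ\Psi^{A}(\beta).
\]
This interchange law is the main obstacle. Since $\alpha$ and $\beta$ are classes of zigzags, I would reduce to generators. Both sides are composites of functors in each variable, so the set of pairs satisfying the identity is closed under composition in $\alpha$ and in $\beta$ separately. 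It therefore suffices to take $\alpha$ equal to $[f]$ or $[\overline s]$ and $\beta$ equal to $[g]$ or $[\overline t]$.

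\begin{itemize}
\item For $\alpha=[f]$ and $\beta=[g]$, both sides equal the image of $(f,g)$, by functoriality of $\mathcal A\times\mathcal B\to(\mathcal A\times\mathcal B)[(S\times T)^{-1}]$.
\item If one of $\alpha,\beta$ is an inverse, say $\alpha=[\overline s]$ and $\beta=[g]$, take the identity $[s]$-version, which holds by the previous case. Then pre- and post-compose with the inverses $\Psi_B([s])^{-1}$ and $\Psi_{B'}([s])^{-1}$.
\item The case with two inverses is handled in the same way.
\end{itemize}

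Having $\Psi$, I would check $\Psi\circ\Theta=\operatorname{Id}$ using the uniqueness part of the universal property of $(\mathcal A\times\mathcal B)[(S\times T)^{-1}]$. Both functors agree after precomposition with the localization functor, since on $(f,g)=(f,\operatorname{Id})\circ(\operatorname{Id},g)$ each gives the image of $(f,g)$.

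For $\Theta\circ\Psi=\operatorname{Id}$, note that both are functors on a product category. So it suffices to check them on the two kinds of partial morphisms $(\alpha,\operatorname{Id}_B)$ and $(\operatorname{Id}_A,\beta)$, because every morphism $(\alpha,\beta)$ is a composite of these. For fixed $B$, the functors $\Theta\circ\Psi_B$ and $\alpha\mapsto(\alpha,\operatorname{Id}_B)$ agree on $\mathcal A$, so they coincide by the uniqueness in the universal property of $\mathcal A[S^{-1}]$. The same argument applies to the other variable.

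Both composites are identities on objects. Hence $\Theta$ is an isomorphism of categories, and the induction completes the proof.
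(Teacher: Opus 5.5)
Your argument is correct. For this lemma the paper gives no proof of its own; it simply cites Maltsiniotis's Lemma~2.1.7 (and Tronin's Theorem~1.2 in the introduction). Your proposal therefore supplies a self-contained argument where the paper relies on a reference.

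Your route is the explicit form of the standard fact that a functor out of $\mathcal A\times\mathcal B$ is the same as two compatible families of partial functors satisfying the interchange law. Equivalently, $\mathrm{Funct}(\mathcal A\times\mathcal B,\mathcal E)\cong\mathrm{Funct}(\mathcal A,\mathrm{Funct}(\mathcal B,\mathcal E))$. You localize one variable at a time, check the interchange law on the generators $[f],[\overline s]$ and $[g],[\overline t]$, and obtain both composites as identities from the uniqueness clauses of the three universal properties.

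What this buys:
\begin{itemize}
\item It uses only the universal property, never the zigzag presentation.
\item It shows exactly where the hypotheses $\operatorname{Id}\in S$ and $\operatorname{Id}\in T$ enter, namely to guarantee $(s,\operatorname{Id}_B)\in S\times T$ and $(\operatorname{Id}_A,t)\in S\times T$.
\end{itemize}

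One step is left implicit: that in the induction the composite of the binary comparison with $\Theta_J\times\operatorname{Id}$ is the canonical functor. This follows at once from uniqueness, since both functors $G$ satisfy $G\circ Q_I=\prod_{i\in I}Q_i$.

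Your proof is also intrinsically finite. It depends on writing $(\alpha,\beta)=(\alpha,\operatorname{Id})\circ(\operatorname{Id},\beta)$ and on induction on $|I|$. For infinite $I$, a morphism of $\prod_i\mathcal A_i[S_i^{-1}]$ need not be a finite composite of morphisms that are identities in all but one coordinate. So the inverse cannot be assembled from partial functors, which is why Section~\ref{sec:product-localization} has to work with the quantitative invariants $\ell$ and $\delta$ instead.
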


\begin{proof}
  See \cite[Lemma~2.1.7, pp.~74--75]{MaltsiniotisGrothendieck}.
\end{proof}

The binary product and coproduct assertions in the following proposition are proved in \cite[Proposition~2.1.8, p.~75]{MaltsiniotisGrothendieck}.
The product assertion also follows from \cite[Theorem~1.3, p.~1526]{TroninProducts}, while \cite[Theorem~3.1, pp.~1535--1536]{TroninProducts} gives the corresponding preadditive conclusion.
Since the descent of the coproduct--diagonal adjunction in the finite case provides the basic idea for the main theorems below, the details are included here.

\begin{proposition}\label{prop:finite-structure-localization}
  Let $\mathcal{A}$ be a category and $S \subseteq \mathsf{Mor}(\mathcal{A})$ a class of morphisms.
  Let $Q:\mathcal A\longrightarrow \mathcal A[S^{-1}]$ be the localization.
  Assume further that $S$ contains all identity morphisms: this does not change the localization, but makes the statement more convenient.
  \begin{enumerate}[label=\textup{(\arabic*)}]
    \item If $\mathcal{A}$ has finite coproducts and $S$ is closed under finite coproducts, then $\mathcal{A}[S^{-1}]$ has finite coproducts and $Q$ preserves them;
    \item If $\mathcal{A}$ has finite products and $S$ is closed under finite products, then $\mathcal{A}[S^{-1}]$ has finite products and $Q$ preserves them;
    \item If $\mathcal{A}$ is an additive category and $S$ is closed under finite biproducts, then $\mathcal{A}[S^{-1}]$ is an additive category and $Q$ preserves finite biproducts.
  \end{enumerate}
\end{proposition}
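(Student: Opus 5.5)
The plan is to descend the coproduct--diagonal adjunction along the localizations, using Lemma~\ref{lem:localization-binary-product} to identify the localization of a finite product category with the product of localizations. The empty case is already covered: an initial (resp.\ terminal) object is the empty coproduct (resp.\ product) and is preserved by Lemma~\ref{lem:nullary-preservation}. So fix a finite nonempty set $I$ and write $\Delta:\mathcal A\longrightarrow\mathcal A^I$ for the diagonal. Since $\mathcal A$ has $I$-indexed coproducts, there is an adjunction $\coprod\dashv\Delta$ with unit $\eta_X:X\longrightarrow\Delta\coprod_i X_i$ given by the structure maps $e_i$, and counit $\varepsilon_A:\coprod_{i\in I}A\longrightarrow A$ the codiagonal.

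First, observe that $\Delta$ sends $S$ into $S^I$, because $S^I$ is taken componentwise. Also, $\coprod$ sends $S^I$ into $S$; this is exactly the hypothesis that $S$ is closed under finite coproducts. By the universal property, both functors induce functors between localizations
\[
  \overline{\Delta}:\mathcal A[S^{-1}]\longrightarrow \mathcal A^I[(S^I)^{-1}]\cong\mathcal A[S^{-1}]^I,
  \qquad
  \overline{\coprod}:\mathcal A[S^{-1}]^I\longrightarrow\mathcal A[S^{-1}].
\]
Here the isomorphism is the one from Lemma~\ref{lem:localization-binary-product}, which is where the hypothesis that $S$ contains identities is used. Under this identification, $\overline\Delta$ becomes the diagonal of $\mathcal A[S^{-1}]$; this holds by uniqueness in the universal property, since both functors agree after precomposition with $Q$.

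Next, I transport the unit and counit. I would use the $2$-categorical part of the universal property of localization: precomposition with $Q$ (resp.\ $Q^I$) is fully faithful on functor categories, so natural transformations between functors out of $\mathcal A$ that invert $S$ descend uniquely. Concretely, $Q^I\eta$ is a natural transformation $Q^I\Rightarrow \overline\Delta\,\overline{\coprod}\,Q^I$. Because the objects of the localization coincide with those of $\mathcal A^I$, the same components give a family of morphisms. Naturality with respect to $[\mathfrak P]$ reduces, by decomposing $\mathfrak P$ into arrows, to naturality with respect to arrows $f$ and formal inverses $\overline s$. For $\overline s$ it follows from naturality with respect to $s$ by inverting. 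The same argument applies to $\varepsilon$. The triangle identities hold on components, and the components are unchanged, so $\overline{\coprod}\dashv\Delta$ on $\mathcal A[S^{-1}]$. Hence $\mathcal A[S^{-1}]$ has $I$-indexed coproducts, computed as $\overline{\coprod}(QX_i)_i=Q\coprod X_i$ with structure maps $Qe_i$. Thus $Q$ preserves them.

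Part~(2) is dual, via the $\Delta\dashv\prod$ adjunction. For (3), apply (1) and (2) simultaneously. The canonical map $\coprod X_i\to\prod X_i$ in $\mathcal A$ is an isomorphism and is sent by $Q$ to the canonical map in the localization, so finite biproducts exist. Preadditivity of $\mathcal A[S^{-1}]$ then follows from the standard fact that a category with finite biproducts is semiadditive, giving commutative monoid structures on hom-sets via $\nabla\circ(f\oplus g)\circ\Delta$. Negatives come from $Q(-\mathrm{id}_X)$: write $-[\mathfrak P]=[\mathfrak P]\circ Q(-\mathrm{id})$, and check it is an additive inverse using bilinearity of the semiadditive structure. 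The step I expect to be the main obstacle is this last one. Inverses must exist for arbitrary zigzag morphisms, not just images of $Q$, and they must be expressed via $Q(-\mathrm{id}_X)$ and the identity $\mathrm{id}+(-\mathrm{id})=0$ preserved by $Q$, since $Q$ preserves biproducts.
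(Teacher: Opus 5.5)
Your proposal is correct and follows essentially the paper's route: descend the adjunction $\coprod\dashv\Delta$ along the localizations, using Lemma~\ref{lem:localization-binary-product} to regard $Q^I$ as the localization of $\mathcal A^I$ at $S^I$, with the empty case handled by Lemma~\ref{lem:nullary-preservation}. The differences are only in level of detail: you treat all finite nonempty $I$ at once rather than binary coproducts, you verify the descent of unit and counit by hand instead of citing Gabriel--Zisman's Lemma~1.2, and you spell out part~(3) (biproducts from (1)--(2), negatives via $[\mathfrak P]\circ Q(-\operatorname{Id}_X)$), which the paper treats as immediate.
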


\begin{proof}
  It suffices first to prove (1); assertion (2) is dual.
  (3) follows from (1) and (2).

  \medskip

  By Lemma~\ref{lem:nullary-preservation}, it remains only to prove the following: if $\mathcal{A}$ has finite coproducts and $S$ is closed under finite coproducts, then $\mathcal{A}[S^{-1}]$ has binary coproducts and $Q$ preserves them.
  The binary coproduct functor $\coprod : \mathcal{A} \times \mathcal{A} \longrightarrow \mathcal{A}$ and the diagonal functor $\Delta : \mathcal{A} \longrightarrow \mathcal{A} \times \mathcal{A}$ are respectively left and right adjoint.
  In particular, the right adjoint $\Delta$ sends $S$ to $S \times S$; since $S$ is closed under binary coproducts, the left adjoint $\coprod$ sends $S \times S$ to $S$.
  By \cite[Lemma~1.1.6, p.~9]{KrauseHomologicalTheory}, or by the primary source \cite[Chapter~I, \S~1, Lemma~1.2, p.~7]{GabrielZisman}, the adjunction $\coprod \dashv \Delta$ therefore induces an adjunction on the localized categories
  \[
    \xymatrix@C=30mm@R=10mm{
    \mathcal A\times\mathcal A
    \ar@<0.8ex>[r]^{\coprod}
    \ar[d]_{Q\times Q}
    &
    \mathcal A
    \ar@<0.8ex>[l]^{\Delta}
    \ar[d]^{Q}
    \\
    \mathcal A[S^{-1}]\times\mathcal A[S^{-1}]
    \ar@<0.8ex>[r]^-{\coprod_S}
    &
    \mathcal A[S^{-1}]
    \ar@<0.8ex>[l]^-{\Delta_S}
    } \qquad \xymatrix@R=10mm{
    Q \circ \coprod = \coprod _S \circ (Q \times Q),\\
    (Q \times Q) \circ \Delta = \Delta_S \circ Q.
    }
  \]
  By Lemma~\ref{lem:localization-binary-product}, one may regard $Q \times Q$ as the localization functor of $\mathcal{A} \times \mathcal{A}$ at $S \times S$.
  It is clear that $\Delta_S$ is still the diagonal functor and has a left adjoint.
  Consequently, $\mathcal{A}[S^{-1}]$ has binary coproducts and hence finite coproducts.
  The commutative diagram shows that the localization $Q$ preserves coproducts.
\end{proof}

\section{Localization and (co)products}\label{sec:finite-counterexamples}

\subsection{Examples in which \texorpdfstring{$\mathcal{A}$}{} has finite (co)products but \texorpdfstring{$\mathcal{A}[S^{-1}]$}{} does not}

Consider a localization $Q:\mathcal A\longrightarrow\mathcal A[S^{-1}]$.
The following example shows that even if $\mathcal{A}$ has finite (co)products, $\mathcal{A}[S^{-1}]$ need not have finite (co)products.

\begin{example}\label{ex_seven_element_lattice}
  Let $\mathcal A$ be the poset category determined by the following Hasse diagram:
  \[
    \vcenter{\hbox{$\xymatrix@C=8mm@R=4mm{
      &A\ar[dr]^{\gamma}&&B\ar[dr]^{\eta}&\\
      \bot\ar[ur]^{\alpha}\ar[dr]_{\beta}&&
      J\ar[ur]^{\varepsilon}\ar[dr]_{\zeta}&&\top\\
      &X\ar[ur]_{\delta}&&Y\ar[ur]_{\theta}&
      }$}}.
  \]
  This is a finite lattice, so $\mathcal A$ has all finite products and finite coproducts.
  Let $S=\{s\} = \{\varepsilon\circ\gamma\}$.

  \medskip

  It will be shown that $QX \coprod QX$ and $QY \prod QY$ do not exist in $\mathcal A[S^{-1}]$.
\end{example}

\begin{proof}
  Take any morphism $[\mathfrak P]$ in $\mathcal{A}[S^{-1}]$, where $\mathfrak P \in \langle\mathsf{Mor}(\mathcal A)\sqcup\overline S\rangle$ is a composition of arrows.
  If two arrows from $\overline {S}$ occur in $\mathfrak P$, then between them there is an arrow or a composition of arrows from $A$ to $B$; hence $\mathfrak P$ has the form
  \[
    \cdots \ast \overline s \ast s \ast \overline s \ast \cdots \quad \text{or} \quad \cdots \ast \overline s \ast \varepsilon \ast \gamma \ast \overline s \ast \cdots.
  \]
  Since $\overline s \ast s \ast \overline s$ and $\overline s \ast \varepsilon \ast \gamma \ast \overline s$ are both equivalent to $\overline s$, one may assume that $\overline s$ occurs at most once in $\mathfrak P$.
  Therefore, for every $M, N \in \mathcal{A}$,
  \begin{equation}\label{eq_hom_bound}
    |\mathrm{Hom}_{\mathcal{A}[S^{-1}]}(QM,QN)|
    \leq \underset{\substack{\text{number of morphisms}\\\text{containing no }\overline s}}{\underbracket{|\mathrm{Hom}_{\mathcal{A}}(M,N)|}}
    \quad + \underset{\substack{\text{number of morphisms containing}\\\text{exactly one }\overline s}}{\underbracket{|\mathrm{Hom}_{\mathcal{A}}(M,B)| \cdot |\mathrm{Hom}_{\mathcal{A}}(A,N)|}}
    \leq 2.
  \end{equation}
  Next, one proves that $\mathrm{Hom}_{\mathcal A[S^{-1}]}(QX,QY)$ contains two distinct morphisms.
  Together with \eqref{eq_hom_bound}, this will show that its cardinality is $2$.
  Let $k$ be a field.
  Consider the functor $F : \mathcal{A} \to \mathrm{Vect}_k$ given by
  \[
    F :\quad
    \vcenter{\hbox{$\xymatrix@C=6mm@R=4mm{
      &A\ar[dr]^{\gamma}&&B\ar[dr]^{\eta}&\\
      \bot\ar[ur]^{\alpha}\ar[dr]_{\beta}&&
      J\ar[ur]^{\varepsilon}\ar[dr]_{\zeta}&&\top\\
      &X\ar[ur]_{\delta}&&Y\ar[ur]_{\theta}&
      }$}} \quad
    \longrightarrow \quad
    \vcenter{\hbox{$\xymatrix@C=6mm@R=4mm{
      &k\ar[dr]^{e_1}&&k\ar[dr]^{0}&\\
      0\ar[ur]^{0}\ar[dr]_{0}&&
      k^2\ar[ur]^{p_1}\ar[dr]_{p_2}&&0\\
      &k\ar[ur]_{e_2}&&k\ar[ur]_{0}&
      }$}}.
  \]
  By construction, $Fs = \operatorname{Id}_k$.
  The universal property of localization gives a unique functor
  \[
    \widetilde F:\mathcal A[S^{-1}]
    \longrightarrow\operatorname{Vect}_k,
    \qquad \widetilde F\circ Q=F.
  \]
  The morphisms $[\zeta\ast\delta]$ and $[\zeta\ast\gamma\ast\overline s\ast \varepsilon\ast\delta]$ are distinct in $\mathrm{Hom}_{\mathcal{A}[S^{-1}]}(QX, QY)$.
  By construction,
  \begin{align*}
    \widetilde F([\zeta\ast\delta])
     & =F(\zeta) \circ F(\delta)
    =p_2\circ e_2
    =\operatorname{Id}_k,                                                             \\
    \widetilde F([\zeta\ast\gamma\ast\overline s\ast \varepsilon\ast\delta])
     & =F(\zeta) \circ F(\gamma) \circ F(s)^{-1} \circ F(\varepsilon) \circ F(\delta)
    =p_2\circ e_1\circ \operatorname{Id}_k^{-1}\circ p_1\circ e_2
    =0.
  \end{align*}
  Therefore $\widetilde F([\zeta\ast\delta]) \neq \widetilde F([\zeta\ast\gamma\ast\overline s\ast \varepsilon\ast\delta])$, and hence $[\zeta\ast\delta] \neq [\zeta\ast\gamma\ast\overline s\ast \varepsilon\ast\delta]$.
  This proves that $\left| \mathrm{Hom}_{\mathcal A[S^{-1}]}(QX,QY) \right|=2$.

  \medskip

  It remains to show that $QX \coprod QX$ does not exist.
  Otherwise, \eqref{eq_coprod} would give
  \[
    \big|\mathrm{Hom}_{\mathcal A[S^{-1}]}(QX\coprod QX,QY)\big| = \big|\mathrm{Hom}_{\mathcal A[S^{-1}]}(QX,QY)\big|^2 = 4.
  \]
  This contradicts \eqref{eq_hom_bound}.
  If $QY\prod QY$ existed, then \eqref{eq_prod} would similarly give
  \[
    \big|\mathrm{Hom}_{\mathcal A[S^{-1}]}(QX,QY\prod QY)\big|
    =\big|\mathrm{Hom}_{\mathcal A[S^{-1}]}(QX,QY)\big|^2=4,
  \]
  again contradicting \eqref{eq_hom_bound}.
\end{proof}

The following example shows that a Gabriel--Zisman localization of an additive category need not be additive.
A complete proof valid over an arbitrary field is given below; in fact, the resulting localization is not even preadditive.

\begin{example}\label{ex:vector-space-nonpreadditive-localization}
  Let $\mathcal A$ be the category of finite-dimensional $k$-vector spaces, let $S = \{0 \xlongrightarrow s k\}$, and let $Q : \mathcal{A} \longrightarrow \mathcal{A}[S^{-1}]$ be the localization.
  Then $\mathcal A[S^{-1}]$ is not an additive category.
\end{example}

\begin{proof}
  By Lemma~\ref{lem:nullary-preservation}, $Q0$ is a zero object of $\mathcal A[S^{-1}]$.
  Since $Qs:Q0\longrightarrow Qk$ is an isomorphism, $Qk$ is also a zero object.
  Moreover $Q$ is full.
  Indeed, $Q(s)^{-1}$ is represented by $k \longrightarrow 0$.
  It follows that for each $X,Y \in \mathcal{A}$, $\mathrm{Hom}_{\mathcal{A}[S^{-1}]}(QX,QY)$ is a quotient of $\mathrm{Hom}_{\mathcal{A}}(X,Y)$.

  \medskip

  Consider the functor
  \[
    \Phi:\mathcal A\longrightarrow\mathcal A, \qquad
    \Phi(V)=\bigwedge\nolimits^2V \oplus\bigl(\bigwedge\nolimits^2V\bigr)\otimes_kV,\quad
    \Phi(f)=\bigwedge\nolimits^2f \oplus\bigl(\bigwedge\nolimits^2f\bigr)\otimes_kf,
  \]
  Since both $\Phi k$ and $\Phi 0$ are zero objects, $\Phi s$ is an isomorphism.
  The universal property of localization gives a functor $\widetilde\Phi:\mathcal{A}[S^{-1}]\longrightarrow\mathcal A$ satisfying $\widetilde\Phi\circ Q=\Phi$.
  Fix an isomorphism $\bigwedge^2k^2\cong k$.
  For every $g\in\operatorname{GL}_2(k)$, a direct calculation gives $\Phi(g)=\det(g)\oplus\bigl(\det(g) \cdot g\bigr)$.
  Therefore, if $g,h\in\operatorname{GL}_2(k)$ and $[g]=[h]$, comparing the first components in this formula gives $\det(g)=\det(h)$, and then comparing the second components gives $g=h$.
  It follows that
  \[
    \operatorname{GL}_2(k)\longrightarrow
    \operatorname{End}_{\mathcal A [S^{-1}]}(Qk^2),
    \qquad g\longmapsto[g],
  \]
  is injective.
  Moreover, $\Phi(\operatorname{Id}_{k^2})\neq\Phi(0)$, so $[\operatorname{Id}_{k^2}]\neq[0]$.

  \medskip

  Now suppose, for a contradiction, that $\mathcal A [S^{-1}]$ is preadditive.
  Since $Qk$ is a zero object, every rank $1$ linear map $f:k^2\longrightarrow k^2$ satisfies $[f] = [0]$.
  On the other hand, every rank $2$ linear map is an isomorphism.
  Since $Q$ is full, every element of $\operatorname{End}_{\mathcal A [S^{-1}]}(Qk^2)$ has the form $[f]$.
  Hence the ring $D = \operatorname{End}_{\mathcal A [S^{-1}]}(Qk^2)$ is nonzero and every nonzero element is invertible; thus $D$ is a division ring, so $1_D$ has at most two square roots.
  If $\operatorname{char}(k)\neq2$, then $\pm \begin{psmallmatrix}-1&0\\0&1\end{psmallmatrix}$ give a contradiction; if $\operatorname{char}(k)=2$, then $\begin{psmallmatrix}1&1\\0&1\end{psmallmatrix}$ and $\begin{psmallmatrix}1&0\\1&1\end{psmallmatrix}$ give a contradiction.

  \medskip

  Thus $\mathcal A[S^{-1}]$ is not preadditive and hence is not additive.
\end{proof}

\subsection{Examples in which \texorpdfstring{$\mathcal{A}$}{} and \texorpdfstring{$\mathcal{A}[S^{-1}]$}{} have finite (co)products that are not preserved by \texorpdfstring{$Q$}{}}

Consider a localization $Q:\mathcal A\longrightarrow\mathcal A[S^{-1}]$.
The following example shows that even if both $\mathcal{A}$ and $\mathcal{A}[S^{-1}]$ have finite (co)products, $Q$ need not preserve them.

\begin{example}\label{ex:diamond-lattice}
  Let $\mathcal A$ be the poset category determined by the following diamond lattice:
  \[
    \xymatrix@C=8mm@R=4mm{
    &A\ar[dr]^{\gamma}&\\
    \bot\ar[ur]^{\alpha}\ar[dr]_{\beta}&&\top\\
    &B\ar[ur]_{\delta}&
    }
  \]
  Let $S=\{\alpha\}$ and let $Q:\mathcal A\longrightarrow\mathcal A[S^{-1}]$ be the localization.
  Both $\mathcal A$ and $\mathcal A[S^{-1}]$ have finite coproducts, but $Q$ does not preserve binary coproducts.

\end{example}

\begin{proof}
  Let $\mathbf 3=(0<1<2)$ be the poset category determined by the three-element chain.
  Define a functor $F:\mathcal A\longrightarrow\mathbf 3$ by
  \[
    F(\bot)=F(A)=0,
    \qquad F(B)=1,
    \qquad F(\top)=2.
  \]
  Then $F(\alpha)=\operatorname{Id}_0$.
  By the universal property of localization, $F$ factors through $Q$ as $\widetilde F:\mathcal A[S^{-1}]\longrightarrow\mathbf 3$, with $\widetilde F\circ Q=F$.

  \medskip

  The functor $\widetilde F$ is an equivalence of categories.
  In the other direction, define a functor $G:\mathbf 3\longrightarrow \mathcal A[S^{-1}]$.
  On objects and morphisms it is given by
  \[
    G : \quad 0 \longrightarrow 1 \longrightarrow 2 \quad \longmapsto \quad
    QA \xlongrightarrow{[\beta \ast \overline \alpha]} QB \xlongrightarrow{[\delta]} Q\top.
  \]
  Here $[\delta\ast\beta\ast\overline\alpha] = [\gamma\ast\alpha\ast\overline\alpha]= [\gamma]$.
  This shows that $G$ is indeed a functor.
  A direct calculation gives $\widetilde FG=\operatorname{Id}_{\mathbf 3}$.
  Define a natural isomorphism $\eta:\operatorname{Id}_{\mathcal A[S^{-1}]}\cong G\widetilde F$; its components are the vertical arrows in the following diagram:
  \[
    \xymatrix@C=13mm@R=8mm{
    \operatorname{Id}&
    Q\bot\ar@<.55ex>[r]^{[\alpha]}\ar[d]_{[\alpha]}&
    QA\ar@<.55ex>[l]^{[\overline\alpha]}
    \ar[r]^{[\beta\ast\overline\alpha]}
    \ar[d]^{[\operatorname{Id}_A]}&
    QB\ar[r]^{[\delta]}\ar[d]^{[\operatorname{Id}_B]}&
    Q\top\ar[d]^{[\operatorname{Id}_{\top}]}\\
    G\widetilde F&
    QA\ar@<.55ex>[r]^{[\operatorname{Id}_A]}&
    QA\ar@<.55ex>[l]^{[\operatorname{Id}_A]}
    \ar[r]_{[\beta\ast\overline\alpha]}&
    QB\ar[r]_{[\delta]}&
    Q\top.
    }
  \]
  Since $\mathbf 3$ is a bounded lattice, it has finite products and finite coproducts, and therefore so does $\mathcal{A}[S^{-1}]$.
  But $Q$ does not preserve binary coproducts.
  Indeed,
  \[
    Q (A \coprod B) = Q \top \ncong QB \cong QA \coprod QB.
  \]
  This gives an example in which both $\mathcal{A}$ and $\mathcal{A}[S^{-1}]$ have finite coproducts, but $Q$ does not preserve binary coproducts.
\end{proof}

Passing to opposite categories gives an example in which both $\mathcal{A}$ and $\mathcal{A}[S^{-1}]$ have finite products, but $Q$ does not preserve binary products.

\section{Additive localizations and infinite (co)products}\label{sec:additive-infinite-counterexamples}

\subsection{\texorpdfstring{An example in which $Q$ is additive, $\mathcal{A}$ has arbitrary coproducts, but $\mathcal{A}[S^{-1}]$ does not}{An example in which Q is additive, A has arbitrary coproducts, but its localization does not}}

Let $Q : \mathcal{A} \longrightarrow \mathcal{A}[S^{-1}]$ be a localization.
Assume that $\mathcal{A}$ and $\mathcal{A}[S^{-1}]$ are additive categories and that $Q$ is an additive functor.
Even if $\mathcal{A}$ has arbitrary coproducts, $\mathcal{A}[S^{-1}]$ need not have arbitrary coproducts.

\begin{example}\label{ex:additive-localization-without-countable-coproduct}
  Let $\mathcal A$ be the category of free abelian groups; its objects have the form $\coprod_{i\in I}\mathbb Z$, where $I$ is a set, and its morphisms are group homomorphisms.
  Let
  \[
    S=\left\{s:F\longrightarrow G\ \middle|\
    s\text{ is injective, and there exists $n\geq0$ such that }2^n\operatorname{coker}(s)=0\right\}.
  \]
  Then $\mathcal A$ has coproducts, $\mathcal A[S^{-1}]$ is an additive category, and the localization $Q:\mathcal A\longrightarrow\mathcal A[S^{-1}]$ is additive; however, the coproduct of the countable family $(Q\mathbb Z)_{n\geq1}$ does not exist in $\mathcal A[S^{-1}]$.
\end{example}

\begin{proof}
  It is clear that $S$ is closed under finite biproducts.
  By Proposition~\ref{prop:finite-structure-localization}(3), $\mathcal A[S^{-1}]$ is an additive category and $Q$ is an additive functor.

  \medskip

  Let $R=\mathbb Z[1/2]$ and consider the tensor functor
  \[
    T=-\otimes_{\mathbb Z}R:
    \mathcal A\longrightarrow R\text{-}\mathsf{Mod}.
  \]
  Since $R$ is a flat $\mathbb Z$-module, the tensor functor is exact.
  For every $S$-morphism $s:F\longrightarrow G$, there is a short exact sequence
  \[
    0\longrightarrow F\otimes_{\mathbb Z}R
    \xlongrightarrow{T(s)}G\otimes_{\mathbb Z}R
    \longrightarrow\operatorname{coker}(s)\otimes_{\mathbb Z}R
    \longrightarrow0.
  \]
  Since $2$ is invertible in $R$ and $\operatorname{coker}(s)$ is annihilated by a power of $2$, one has $\operatorname{coker}(s)\otimes_{\mathbb Z}R=0$, so $T(s)$ is an isomorphism.
  By the universal property of localization, there is a unique functor $\widetilde T:\mathcal A[S^{-1}]\longrightarrow R\text{-}\mathsf{Mod}$ such that $T=\widetilde T\circ Q$.

  \medskip

  It remains to prove that the coproduct of $(Q\mathbb Z)_{n\geq1}$ does not exist.
  Suppose, for a contradiction, that an object $QC$ together with structure morphisms $(e_n:Q\mathbb Z\longrightarrow QC)_{n \geq 1}$ forms this coproduct.
  For each $n\geq1$, let $x_n=\widetilde T(e_n)(1)\in C\otimes_{\mathbb Z}R$.
  Regard $C$ as a subgroup of $C\otimes_{\mathbb Z}R = \bigcup_{d\geq0}2^{-d}C$.
  One may therefore choose $d_n\geq0$ such that $2^{d_n}x_n\in C$.
  For every $m\geq0$, define the multiplication map
  \[
    \mu_m:\mathbb Z\longrightarrow\mathbb Z,
    \qquad a\longmapsto2^ma.
  \]
  By construction, $\mu_m\in S$.
  Thus $[\overline{\mu_m}] : Q\mathbb Z \longrightarrow Q\mathbb Z$ is a well-defined morphism in $\mathcal{A}[S^{-1}]$.
  Applying the universal property of the coproduct to the family of morphisms $([\overline{\mu_{d_n+n}}])_{n\geq1}$, there is a unique morphism $u:QC\longrightarrow Q\mathbb Z$ such that $u\circ e_n=[\overline{\mu_{d_n+n}}]$ for all $n \geq 1$.

  Choose a finite arrow representation $u=[\mathfrak P]$.
  Suppose that all inverse arrows occurring in $\mathfrak P$ are $(\overline{s_i}:G_i\longrightarrow F_i)_{1 \leq i \leq r}$, where $s_i:F_i\longrightarrow G_i$ belongs to $S$.
  For every $i$, choose $N_i\geq0$ such that $2^{N_i}\operatorname{coker}(s_i)=0$.
  Then $T(s_i)^{-1}(G_i)\subseteq2^{-N_i}F_i$.
  Indeed, for every $y\in G_i$, one can choose $z\in F_i$ such that $s_i(z)=2^{N_i}y$, and hence $T(s_i)^{-1}(y)=2^{-N_i}z$.
  An ordinary arrow does not increase the denominator, whereas an inverse arrow $\overline{s_i}$ multiplies the denominator by at most $2^{N_i}$.
  Composing successively along the arrows in $\mathfrak P$ therefore gives
  \[
    \widetilde T(u)(C)\subseteq2^{-N}\mathbb Z , \qquad (N =N_1+\cdots+N_r).
  \]
  If no inverse arrow occurs in $\mathfrak P$, take $N=0$ in this formula.
  Since $2^{d_n}x_n\in C$, one has
  \[
    2^{d_n}\widetilde T(u)(x_n)
    =\widetilde T(u)(2^{d_n}x_n)
    \in2^{-N}\mathbb Z.
  \]
  On the other hand, $u\circ e_n=[\overline{\mu_{d_n+n}}]$ gives
  \[
    \widetilde T(u)(x_n)
    =\widetilde T(u\circ e_n)(1)
    =\widetilde T([\overline{\mu_{d_n+n}}])(1)
    =2^{-(d_n+n)}.
  \]
  Choose $n>N$.
  Then $2^{-(d_n+n)}\notin2^{-(d_n+N)}\mathbb Z$, a contradiction.
\end{proof}

Passing to opposite categories in the preceding example shows that $\mathcal{A}$ and $\mathcal{A}[S^{-1}]$ may both be additive categories and $Q$ additive, while $\mathcal{A}$ has arbitrary products but $\mathcal{A}[S^{-1}]$ does not.

\subsection{\texorpdfstring{An example in which $Q$ is additive and $\mathcal{A}$ and $\mathcal{A}[S^{-1}]$ have arbitrary products that are not preserved by $Q$}{An example in which Q is additive and A and its localization have arbitrary products that are not preserved by Q}}

\begin{example}\label{ex:torsion-complexes}
  Let $\mathsf{TorsAb}$ denote the category of torsion abelian groups.
  An object $G \in \mathsf{Ab}$ is called a torsion abelian group if every $g \in G$ has finite order.
  Let $\mathcal A=\operatorname{Ch}(\mathsf{TorsAb})$ be the category of cochain complexes in $\mathsf{TorsAb}$.
  Let $S$ be the class of all quasi-isomorphisms in $\mathcal{A}$, and let $Q:\mathcal A\longrightarrow\mathcal A[S^{-1}]$ be the corresponding additive localization.
  Then both $\mathcal A$ and $\mathcal A[S^{-1}]$ have arbitrary products, but $Q$ does not preserve countable products.
\end{example}

\begin{proof}
  Let $t(G)$ denote the torsion subgroup of an abelian group $G$, namely the subgroup consisting of all elements of finite order.
  For any indexing set $I$ and any family of torsion abelian groups $(A_i)_{i\in I}$, their product in $\mathsf{TorsAb}$ is $t\left(\prod_{i\in I}A_i\right)$, where $\prod_{i\in I}A_i$ is the product in $\mathsf{Ab}$.
  Indeed, for any torsion abelian group $t(G)$ and any family of morphisms $(f_i:t(G)\longrightarrow A_i)_{i\in I}$, there is a unique morphism $f:t(G)\longrightarrow\prod_{i\in I}A_i$ such that $f_i=p_i\circ f$ for all $i\in I$, where $p_i:\prod_{i\in I}A_i\longrightarrow A_i$ is the structure morphism.
  In particular, the image of $f$ lies in $t\left(\prod_{i\in I}A_i\right)$, which shows that $t\left(\prod_{i\in I}A_i\right)$ is the product in $\mathsf{TorsAb}$.

  \medskip

  Fix a prime $p$.
  For every $n\geq1$, consider the epimorphism
  \[
    f_n:\mathbb Z/p^n\mathbb Z\longrightarrow\mathbb Z/p\mathbb Z,
    \qquad  a + p^n \mathbb Z \longmapsto a + p \mathbb Z .
  \]
  The family of morphisms $(f_n)_{n\geq1}$ induces the following epimorphism in $\mathsf{Ab}$:
  \[
    \varphi_0 : \prod\nolimits_{n\geq1}^{\mathsf{Ab}}\mathbb Z/p^n\mathbb Z
    \longrightarrow
    \prod\nolimits_{n\geq1}^{\mathsf{Ab}}\mathbb Z/p\mathbb Z,
    \qquad
    (x_n)_{n\geq1}\longmapsto(f_n(x_n))_{n\geq1}.
  \]
  Let $\varphi$ be the restriction of $\varphi_0$ to the torsion subgroup, namely
  \[
    \varphi:
    t\left(\prod_{n\geq1}\mathbb Z/p^n\mathbb Z\right)
    \longrightarrow
    \prod_{n\geq1}\mathbb Z/p\mathbb Z,
    \qquad
    (x_n)_{n\geq1}\longmapsto(f_n(x_n))_{n\geq1}.
  \]
  The morphism $\varphi$ is not surjective.
  Indeed, the element $(\overline1,\overline1,\ldots)$ is not in the image of $\varphi$.
  For any $(x_n)_{n \geq 1} \in \varphi_0 ^{-1}((\overline1,\overline1,\ldots))$, the equality $f_n(x_n)=\overline1$ implies that $x_n$ has order $p^n$.
  Hence no nonzero integer annihilates all components of $(x_n)_{n\geq1}$.
  Thus $(x_n)_{n \geq 1} \notin t\left(\prod_{n\geq1}\mathbb Z/p^n\mathbb Z\right)$, and $\varphi$ is not surjective.

  \medskip

  For every $n\geq1$, regard the short exact sequence
  \[
    0 \longrightarrow \ker(f_n) \longrightarrow \mathbb Z/p^n\mathbb Z \xlongrightarrow{f_n} \mathbb Z/p\mathbb Z \longrightarrow 0
  \]
  as an acyclic complex $K_n$ concentrated in degrees $0,1,2$.
  Let $K=\prod_{n\geq1}K_n$ be the product in $\mathcal A$.
  Since products are computed degreewise, the differential of $K$ from degree $1$ to degree $2$ is precisely $\varphi$.
  Because $\varphi$ is not surjective, $H^2(K)=\operatorname{coker}_{\mathsf{TorsAb}}(\varphi)\neq0$.
  Thus $K$ is not acyclic.

  The derived category is the localization $Q : \mathcal{A} \longrightarrow \mathcal A[S^{-1}]=D(\mathsf{TorsAb})$.
  Here $Q$ is the composite of two additive functors $\mathcal A\longrightarrow K(\mathsf{TorsAb}) \longrightarrow D(\mathsf{TorsAb})$.
  The category $\mathsf{TorsAb}$ is a Grothendieck abelian category\footnote{The category $\mathsf{TorsAb}$ is a Serre subcategory of $\mathsf{Ab}$ and is closed under arbitrary coproducts, so the assertion follows from \cite[Proposition~2.2.16 and its proof, pp.~36--37]{KrauseHomologicalTheory}. Taking the generator $\mathbb Z$ of $\mathsf{Ab}$ in the construction given there yields the generator $\coprod_{m\geq1}\mathbb Z/m\mathbb Z$ of $\mathsf{TorsAb}$.}.
  By Serp\'e's theorem on $K$-injective resolutions \cite[Theorem~3.13]{SerpeKInjective}, every unbounded complex has a $K$-injective resolution.
  The degreewise product of a family of $K$-injective complexes $(J_\lambda)_{\lambda\in\Lambda}$ is again $K$-injective: for every acyclic complex $X$, there is a canonical isomorphism
  \[
    \operatorname{Hom}^{\bullet}\!\left(X,\prod_{\lambda\in\Lambda}J_\lambda\right)
    \cong
    \prod_{\lambda\in\Lambda}\operatorname{Hom}^{\bullet}(X,J_\lambda),
  \]
  and arbitrary products of abelian groups preserve exact sequences.
  Therefore $D(\mathsf{TorsAb})$ has arbitrary products, which may be computed as degreewise products of $K$-injective resolutions.

  \medskip

  Finally, every $K_n$ is acyclic, so $\prod_{n\geq1}QK_n\cong0$.
  But $H^2(K)\neq0$, so $QK\not\cong0$.
  Hence the canonical morphism induced by the product projections
  \[
    Q\left(\prod_{n\geq1}K_n\right)
    =QK
    \longrightarrow
    \prod_{n\geq1}QK_n
  \]
  is not an isomorphism.
  This shows that $Q$ does not preserve countable products.
\end{proof}

Passing to opposite categories in the preceding example gives the following example: $\mathcal{A}$ and $\mathcal{A}[S^{-1}]$ are additive categories with arbitrary coproducts, while $Q$ is additive but does not preserve countable coproducts.

\section{Localization of product categories}\label{sec:product-localization}

The study of whether localization preserves finite (co)products used the isomorphism in Lemma~\ref{lem:localization-binary-product}.
Likewise, when studying whether localization preserves arbitrary (co)products, the main obstruction is the canonical functor
\[
  \Theta : \widetilde {\prod_{i \in I} Q_i} :
  \bigg(\prod_{i \in I} \mathcal{A}_i \bigg)\bigg[\big(\prod_{i \in I}S_i\big)^{-1}\bigg]
  \longrightarrow\prod_{i \in I}\mathcal{A}_i[S_i^{-1}],
\]
where $I$ is an arbitrary indexing set, not necessarily finite.
Henceforth every $\mathcal A_i$ is assumed to be nonempty, and $S_i$ is assumed to contain all identity morphisms of $\mathcal A_i$.
If some $\mathcal A_i$ is empty, then the source and target of $\Theta$ are both empty categories; if $I=\varnothing$, then both are terminal categories.
In either case, $\Theta$ is an isomorphism of categories.

\subsection{A necessary and sufficient condition for \texorpdfstring{$\Theta$}{Theta} to be full}
For every $i\in I$, define
\[
  \ell:
  \langle\mathsf{Mor}(\mathcal A_i)\sqcup\overline{S_i}\rangle
  \longrightarrow\mathbb N,
  \qquad
  \ell(\mathfrak P)
  =
  \min\bigl\{|\mathfrak P'|\bigm|[\mathfrak P']=[\mathfrak P]\bigr\},
\]
where $|\mathfrak P|$ denotes the number of arrows in $\mathfrak P$.

\begin{theorem}\label{thm:product-canonical-fullness}
  The canonical functor
  \[
    \Theta : \widetilde {\prod_{i \in I} Q_i} :
    \bigg(\prod_{i \in I} \mathcal{A}_i \bigg)\bigg[\big(\prod_{i \in I}S_i\big)^{-1}\bigg]
    \longrightarrow\prod_{i \in I}\mathcal{A}_i[S_i^{-1}]
  \]
  is full if and only if there exist a finite subset $F\subseteq I$ and $N\in\mathbb N$ such that $\ell(\mathfrak P)\leq N$ for every $i\notin F$ and every $\mathfrak P\in \langle\mathsf{Mor}(\mathcal A_i)\sqcup\overline{S_i}\rangle$.
\end{theorem}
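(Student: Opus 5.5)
We prove the two implications separately. First we fix a normal form for morphisms on both sides. Since every $S_i$ contains all identities, every morphism of $\mathcal A_i[S_i^{-1}]$ is represented by an alternating word $f_n\ast\overline{s_n}\ast\cdots\ast f_1\ast\overline{s_1}\ast f_0$, as in the proof of Lemma~\ref{lem:nullary-preservation}. Such a word can be padded with identity arrows to any larger even-type length without changing its class, by relation (2) and by $\operatorname{Id}=\overline{\operatorname{Id}}$ in the localization. A morphism of $\bigl(\prod_i\mathcal A_i\bigr)\bigl[\bigl(\prod_iS_i\bigr)^{-1}\bigr]$ is represented by a finite word in $\mathsf{Mor}(\prod\mathcal A_i)\sqcup\overline{\prod S_i}$. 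Its image under $\Theta$ is the family of componentwise words, each of length at most the length of the original word. The basic observation is therefore: a family $([\mathfrak P_i])_{i\in I}$ lies in the image of $\Theta$ if and only if $\sup_i\ell(\mathfrak P_i)<\infty$. In one direction this is the length bound just noted. In the other direction, pad every $\mathfrak P_i$ to a common alternating length $2M+1$ and assemble the componentwise arrows into arrows of the product category; inverse arrows in $\overline{\prod S_i}$ are allowed because each component $s_i$ lies in $S_i$.

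\emph{Sufficiency.} Assume $F$ and $N$ are as in the statement, and take any morphism $([\mathfrak P_i])_{i\in I}$ in $\prod_i\mathcal A_i[S_i^{-1}]$. For $i\notin F$, replace $\mathfrak P_i$ by a representative of length at most $N$. For the finitely many $i\in F$, keep the given representatives, whose lengths are bounded by their finite maximum. Then all lengths are uniformly bounded, and the observation shows the family lies in the image of $\Theta$. So $\Theta$ is full; it is already bijective on objects.

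\emph{Necessity.} Suppose the condition fails. We build by induction a sequence of distinct indices $i_1,i_2,\dots$ and words $\mathfrak P_{i_k}$ with $\ell(\mathfrak P_{i_k})\geq k$. At step $k$, take $F=\{i_1,\dots,i_{k-1}\}$ and $N=k$; the negated condition supplies some $i_k\notin F$ and a word with $\ell>k$. Each $\mathfrak P_{i_k}$ is a morphism $X_{i_k}\to Y_{i_k}$. For $i$ outside the sequence, choose any object $X_i$ (possible because $\mathcal A_i$ is nonempty) and put $\mathfrak P_i=\operatorname{Id}_{X_i}$, taking $Y_i=X_i$. The family $([\mathfrak P_i])_{i\in I}$ is then a morphism $\Theta X\to\Theta Y$ with unbounded minimal lengths, so by the observation it is not in the image of $\Theta$. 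Hence $\Theta$ is not full.

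\emph{Main obstacle.} The delicate step is making the observation rigorous, and in particular the padding. Alternating words of different lengths must be brought to one common length before they can be assembled into arrows of the product category. This relies on identities lying in $S_i$, so that $\overline{\operatorname{Id}}$ is available and represents the identity; relation (3) gives $[\overline{\operatorname{Id}}]=[\operatorname{Id}\ast\overline{\operatorname{Id}}]=[\operatorname{Id}]$. We must also check that the componentwise image of a product word is exactly $\Theta$ of its class, which follows from $\Theta\circ Q=\prod_iQ_i$.
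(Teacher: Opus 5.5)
Your proposal is correct and follows essentially the same route as the paper's proof. Sufficiency pads representatives of uniformly bounded length to a common alternating length $2M+1$ using identity arrows and assembles them coordinatewise. Necessity builds a family with $\ell(\mathfrak P_{i_k})>k$ along distinct indices and uses $\ell(\mathfrak P_i)\leq|\mathfrak R_i|\leq|\mathfrak R|$; packaging both directions into your ``image iff $\sup_i\ell(\mathfrak P_i)<\infty$'' observation is only a repackaging of the same argument.
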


\begin{proof}
  Suppose first that $\Theta$ is full.
  Assume for contradiction that the stated condition fails, i.e.,
  \begin{equation}\label{eq:fullness-bound-negation}
    \forall E\subseteq I\text{ finite},\quad
    \forall n\in\mathbb N,\quad
    \exists i\in I\setminus E,\quad
    \exists\mathfrak P_i\in
    \langle\mathsf{Mor}(\mathcal A_i)\sqcup\overline{S_i}\rangle,
    \quad
    \ell(\mathfrak P_i)>n.
  \end{equation}
  Under this assumption, $I$ must be infinite; fix a sequence $\{i_0, i_1, \ldots\}$.
  Set $E_1 = \{i_0\}$ and choose the corresponding $i_1 \in I \setminus E_1$ by \eqref{eq:fullness-bound-negation}.
  For $n \geq 1$, define $E_n=\{i_0,\ldots,i_{n-1}\}$ inductively.
  One obtains pairwise distinct elements $i_n\in I$ and compositions of arrows
  \begin{equation}\label{eq:fullness-unbounded-compositions}
    \mathfrak P_{i_n}:X_{i_n}\longrightarrow Y_{i_n},
    \qquad
    \ell(\mathfrak P_{i_n})>n
    \qquad(n\geq 1).
  \end{equation}
  For $i\notin\{i_n\mid n\in\mathbb N\}$, let $\mathfrak P_i$ be any identity arrow.
  This gives $([\mathfrak P_i])_{i\in I}\in \prod_{i\in I} \operatorname{Hom}_{\mathcal A_i[S_i^{-1}]} (Q_iX_i,Q_iY_i)$.
  Since $\Theta$ is full, there is a composition of arrows $\mathfrak R$ in $\langle\mathsf{Mor}(\prod_{i\in I}\mathcal A_i)\sqcup\overline{\prod_{i\in I}S_i}\rangle$ for which the following diagram holds:
  \[
    \xymatrix@C=20mm@R=2mm{
    \operatorname{Hom}_{
    (\prod_i\mathcal A_i)[(\prod_iS_i)^{-1}]}
    ((X_i)_i,(Y_i)_i)
    \ar[r]^-{\Theta}
    &
    \prod_i\operatorname{Hom}_{\mathcal A_i[S_i^{-1}]}
    (Q_iX_i,Q_iY_i)
    \\
    [\mathfrak R]\ar@{|->}[r]
    &
    ([\mathfrak R_i])_{i\in I}
    =([\mathfrak P_i])_{i\in I}.
    }
  \]
  Here $\mathfrak R_i$ is the $i$th coordinate of $\mathfrak R$.
  Let $m=|\mathfrak R|$.
  Then $\ell(\mathfrak P_i) \leq |\mathfrak R_i| \leq |\mathfrak R| = m$ holds.
  Taking $n>m$ in \eqref{eq:fullness-unbounded-compositions} gives $n<\ell(\mathfrak P_{i_n})\leq m<n$, a contradiction.

  \medskip

  Conversely, suppose that there exist a finite subset $F\subseteq I$ and $N\in\mathbb N$ such that
  \begin{equation}\label{eq:fullness-assumed-bound}
    \forall i\notin F,\quad
    \forall \mathfrak P\in
    \langle\mathsf{Mor}(\mathcal A_i)\sqcup\overline{S_i}\rangle, \quad \text{one has}
    \quad
    \ell(\mathfrak P)\leq N.
  \end{equation}
  It remains to prove that $\Theta$ is full.
  Take any morphism $([\mathfrak P_i])_{i\in I}:(Q_iX_i)_{i\in I}\longrightarrow(Q_iY_i)_{i\in I}$ in $\prod_{i \in I}\mathcal{A}_i[S_i^{-1}]$.
  For $i\notin F$, use \eqref{eq:fullness-assumed-bound} to choose $\mathfrak R_i$ such that $[\mathfrak R_i]=[\mathfrak P_i]$ and $|\mathfrak R_i|=\ell(\mathfrak P_i)\leq N$.
  For $i\in F$, set $\mathfrak R_i=\mathfrak P_i$.
  Let
  \begin{equation}\label{eq:fullness-global-composition-bound}
    M=\max\bigl(\{N\}\cup
    \{|\mathfrak R_i|\mid i\in F\}\bigr).
  \end{equation}
  Then $|\mathfrak R_i|\leq M$ for every $i\in I$.
  By inserting identity arrows, write each $\mathfrak R_i$ as an equivalent composition of arrows
  \begin{equation}\label{eq:common-alternating-representatives}
    \mathfrak R_i'
    =f_{i,M}\ast\overline{s_{i,M}}\ast f_{i,M-1}\ast
    \cdots\ast f_{i,1}\ast\overline{s_{i,1}}\ast f_{i,0},\qquad
    [\mathfrak R_i']=[\mathfrak R_i]=[\mathfrak P_i],\qquad
    |\mathfrak R_i'| = 2M+1.
  \end{equation}
  where all $f_{i,j}\in\mathsf{Mor}(\mathcal A_i)$ and $s_{i,j}\in S_i$.
  For $0\leq j\leq M$ and $1\leq j\leq M$, respectively, put
  \[
    f_j=(f_{i,j})_{i\in I}\in
    \mathsf{Mor}\left(\prod_{i\in I}\mathcal A_i\right),
    \qquad
    s_j=(s_{i,j})_{i\in I}\in\prod_{i\in I}S_i.
  \]
  The compositions of arrows in \eqref{eq:common-alternating-representatives} assemble coordinatewise into the composition of arrows $\mathfrak R=f_M\ast\overline{s_M}\ast f_{M-1}\ast\cdots\ast f_1\ast\overline{s_1}\ast f_0$.
  Thus
  \[
    \Theta ([\mathfrak R]) = ([\mathfrak R_i'])_{i\in I} =([\mathfrak P_i])_{i\in I}.
  \]
  Therefore $\Theta$ is surjective on every Hom set; that is, $\Theta$ is full.
\end{proof}

The following example shows that $\Theta$ need not be full.

\begin{example}\label{ex:product-localization-not-full}
  Let $I=\mathbb N_{>0}$, and for each $i\in I$ take the same one-object category $\mathcal C$, whose object is $\ast$ and whose morphisms form the monoid $(\mathbb N,+)$.
  Let $S=\{0,1\}\subseteq\mathbb N$, where $0 = \operatorname{Id}_\ast$ is the identity morphism.
  The following canonical functor is not full:
  \[
    \Theta:(\mathcal C^{\mathbb N_{>0}})[(S^{\mathbb N_{>0}})^{-1}]
    \longrightarrow(\mathcal C[S^{-1}])^{\mathbb N_{>0}}.
  \]
\end{example}

\begin{proof}
  One has $\operatorname{End}_{\mathcal C[S^{-1}]}(\ast)\cong(\mathbb Z,+)$.
  Indeed, take any $[\mathfrak P] \in \mathcal{C}[S^{-1}]$.
  Up to equivalence, one may assume that the only inverse arrow in $\mathfrak P$ is that of $1$, denoted by $-1$.
  It is clear that $n \ast 1$ is equivalent to $1 \ast n$, and hence $\overline 1 \ast n$ is equivalent to $n \ast \overline 1$.
  Also, $\overline 1 \ast 1$ is equivalent to $0$.
  Therefore $\mathfrak P$ is equivalent either to an arrow in $\mathsf{Mor}(\mathcal C)$ or to a composition of arrows of the form $\underbracket{\overline 1\ast\cdots\ast\overline 1}_{q\text{ copies}}$.
  It is straightforward to verify that
  \[
    \mathbb N \sqcup \big\{\underbracket{\overline 1\ast\cdots\ast\overline 1}_{q\text{ copies}}\mid q \geq 1\big\} \longrightarrow \mathbb Z, \qquad n \longmapsto n, \quad \underbracket{\overline 1\ast\cdots\ast\overline 1}_{q\text{ copies}} \longmapsto (-q)
  \]
  is a bijection of sets, under which $\ast$ corresponds to addition.
  Thus $\mathrm{End}_{\mathcal{C}[S^{-1}]}(\ast) \cong \mathbb Z$.

  \medskip

  For $n\geq1$, let $[\mathfrak P_n]=-n\in\mathbb Z$.
  There is no $\mathfrak R$ such that $\Theta ([\mathfrak R])=([\mathfrak P_n])_{n \geq 1}$.
  Otherwise, let $m$ be the number of inverse arrows in $\mathfrak R$.
  Then every component of $\mathfrak R$ contains at most $m$ inverse arrows; but $[\mathfrak R]$ has value $-(m+1)$ in its $(m+1)$st component, a contradiction.
  Hence $\Theta$ is not full.
\end{proof}

\subsection{A necessary and sufficient condition for \texorpdfstring{$\Theta$}{Theta} to be faithful}

For every $i\in I$, fix the following five types of elementary rewriting.
Here $f:X\longrightarrow Y$ and $g:Y\longrightarrow Z$ are arrows in $\mathsf{Mor}(\mathcal A_i)$, $a:M\longrightarrow N$ is an arrow in $\mathsf{Mor}(\mathcal A_i)\sqcup\overline{S_i}$, $s:X\longrightarrow Y$ is an arrow in $S_i$, and $\mathfrak U,\mathfrak V$ are compositions of arrows for which the following expressions are defined.
Either $\mathfrak U$ or $\mathfrak V$ may be omitted.
\begin{equation}\label{eq:gz-arrow-composition-relations}
  \begin{aligned}
    \mathfrak U\ast(g\ast f)\ast\mathfrak V
     & \longleftrightarrow
    \mathfrak U\ast(g\circ f)\ast\mathfrak V,          \\
    \mathfrak U\ast(\operatorname{Id}_N\ast a)\ast\mathfrak V
     & \longleftrightarrow
    \mathfrak U\ast a\ast\mathfrak V,                  \\
    \mathfrak U\ast(a\ast\operatorname{Id}_M)\ast\mathfrak V
     & \longleftrightarrow
    \mathfrak U\ast a\ast\mathfrak V,                  \\
    \mathfrak U\ast(s\ast\overline s)\ast\mathfrak V
     & \longleftrightarrow
    \mathfrak U\ast\operatorname{Id}_Y\ast\mathfrak V, \\
    \mathfrak U\ast(\overline s\ast s)\ast\mathfrak V
     & \longleftrightarrow
    \mathfrak U\ast\operatorname{Id}_X\ast\mathfrak V.
  \end{aligned}
\end{equation}
For any two equivalent compositions of arrows $\mathfrak P$ and $\mathfrak Q$, define
\[
  \delta(\mathfrak P,\mathfrak Q)
  =
  \min\left\{
  m\in\mathbb N\ \middle|\
  \begin{array}{c}
    \mathfrak P=\mathfrak R_0
    \longleftrightarrow\mathfrak R_1
    \longleftrightarrow\cdots
    \longleftrightarrow\mathfrak R_m=\mathfrak Q, \\
    \text{each step is one elementary rewriting from \eqref{eq:gz-arrow-composition-relations}}
  \end{array}
  \right\}.
\]
For $L\in\mathbb N$, define
\begin{equation}\label{eq:def-di-L}
  d_i(L) = \sup\left\{ \delta(\mathfrak P,\mathfrak Q) \mid [\mathfrak P]=[\mathfrak Q],\quad |\mathfrak P|\leq L,\quad|\mathfrak Q|\leq L \right\} \in\mathbb N\cup\{\infty\}.
\end{equation}
Here the supremum of the empty set is taken to be $0$.
For $L\geq1$, the set in \eqref{eq:def-di-L} is nonempty because $\delta(\operatorname{Id}_X,\operatorname{Id}_X)=0$.

If $\mathfrak{P}$ and $\mathfrak Q$ are related by at most $n$ elementary rewriting steps, write $\mathfrak{P}\xlongleftrightarrow{\leq n} \mathfrak{Q}$.

\begin{theorem}\label{thm:product-canonical-faithfulness}
  The canonical functor
  \[
    \Theta:\widetilde{\prod_{i\in I}Q_i}:
    \bigg(\prod_{i\in I}\mathcal A_i\bigg)
    \bigg[\bigg(\prod_{i\in I}S_i\bigg)^{-1}\bigg]
    \longrightarrow
    \prod_{i\in I}\mathcal A_i[S_i^{-1}]
  \]
  is faithful if and only if, for every $L\in\mathbb N$, there exist a finite subset $F_L\subseteq I$ and $D_L\in\mathbb N$ such that $d_i(L)\leq D_L$ for every $i\notin F_L$.
\end{theorem}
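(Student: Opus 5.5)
Both directions of Theorem~\ref{thm:product-canonical-faithfulness} mirror the proof of Theorem~\ref{thm:product-canonical-fullness}, with $\delta$ playing the role of $\ell$. The key tool is a coordinatewise description of rewritings in the product category. A composition $\mathfrak R$ in $\langle\mathsf{Mor}(\prod_i\mathcal A_i)\sqcup\overline{\prod_iS_i}\rangle$ has coordinates $\mathfrak R_i$ with $|\mathfrak R_i|=|\mathfrak R|$, and each elementary rewriting of $\mathfrak R$ from \eqref{eq:gz-arrow-composition-relations} projects to an elementary rewriting of every coordinate, of the same type and at the same position. Conversely, suppose every coordinate admits a rewriting of one \emph{common} type at one \emph{common} position. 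Then these assemble into a single rewriting of $\mathfrak R$.

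\emph{Necessity.} Suppose the condition fails for some $L$. Then, as in the fullness proof, choose pairwise distinct $i_n$ and pairs $\mathfrak P_{i_n},\mathfrak Q_{i_n}$ with $[\mathfrak P_{i_n}]=[\mathfrak Q_{i_n}]$, $|\mathfrak P_{i_n}|,|\mathfrak Q_{i_n}|\le L$ and $\delta(\mathfrak P_{i_n},\mathfrak Q_{i_n})>n$. We may fix $L\ge 1$. Pad each by identity arrows (cost at most $2L$ rewritings each) to length exactly $L$, so the inequality $\delta>n-2L$ survives. Also arrange the alternating pattern with all positions of one kind: put an ordinary arrow or identity at even slots and an $\overline s$ or $\overline{\mathrm{Id}}$ at odd slots, using length $2L+1$ instead. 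For $i$ outside the sequence, take identities of the same shape. Assemble the global compositions $\mathfrak P,\mathfrak Q$ between the same objects. They have equal images under $\Theta$. If $\Theta$ is faithful, then $[\mathfrak P]=[\mathfrak Q]$, so some finite chain of $m$ rewritings joins them. Projecting gives $\delta(\mathfrak P_{i_n},\mathfrak Q_{i_n})\le m+O(L)$ for all $n$, a contradiction.

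\emph{Sufficiency.} Take $\mathfrak P,\mathfrak Q$ in the product category with $\Theta[\mathfrak P]=\Theta[\mathfrak Q]$, and let $L=\max(|\mathfrak P|,|\mathfrak Q|)$. For $i\notin F_L$ there is a chain of length at most $D_L$ joining $\mathfrak P_i$ to $\mathfrak Q_i$. For the finitely many $i\in F_L$ there is some finite chain. Hence there is a uniform bound $D$ on chain lengths for all $i$. The task is now to synchronize these chains into a single chain in the product.

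\emph{Main obstacle.} Different coordinates use different rewriting types at different positions, and intermediate lengths vary. I would handle this by performing the coordinate steps one coordinate-step at a time while padding the others. A rewriting applied only in coordinate $i$ is realized globally by a pair of moves. First, insert identities in all other coordinates so that the window becomes one where every other coordinate carries identities; for example, $g\ast f\leftrightarrow g\circ f$ in coordinate $i$ pairs with $\mathrm{Id}\ast\mathrm{Id}\leftrightarrow\mathrm{Id}\circ\mathrm{Id}$ elsewhere. Second, undo the identity insertions. A cleaner route is to keep all compositions in normal alternating form with $\overline{\mathrm{Id}}$ allowed, and to show each step can be simulated by a bounded number $c$ of global steps, with $c$ depending only on the current length. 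Since lengths along chains of length at most $D$ stay at most $L+D$, everything is bounded. Performing step $k$ simultaneously in all coordinates then needs only $O(c)$ global moves per $k$, because the product category permits different morphisms in each coordinate. This yields a finite chain from $\mathfrak P$ to $\mathfrak Q$, so $[\mathfrak P]=[\mathfrak Q]$ and $\Theta$ is faithful. Coordinates whose chain is shorter are padded with trivial steps, such as inserting and then deleting an identity.

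The crux I expect to be delicate is making this simultaneous simulation honest. The types $s\ast\overline s\leftrightarrow\mathrm{Id}$ and $g\ast f\leftrightarrow g\circ f$ change length, and must be realized where other coordinates have arbitrary arrows rather than identities. My first attempt would be a uniform local lemma. Any window of $k\le 3$ consecutive arrows in the product can be rewritten, in a bounded number of global steps, so that the chosen coordinates undergo their chosen local move. For instance, compose all ordinary arrows in the window first, which is valid in every coordinate because the ordinary positions are aligned in normal form. The cancellation $s\ast\overline s$ in some coordinates is paired with $\mathrm{Id}\ast\overline{\mathrm{Id}}\to\mathrm{Id}$ in others after those coordinates are rewritten into that shape.
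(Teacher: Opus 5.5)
Your necessity argument is correct. Normalizing every coordinate to a fixed alternating pattern of length $2L+1$, at a cost of $O(L)$ rewritings, is a valid substitute for the paper's device of passing to a subsequence on which the sign strings are constant.

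The sufficiency direction, however, has a genuine gap, exactly at the point you call delicate. A global elementary rewriting must use the same row of \eqref{eq:gz-arrow-composition-relations}, in the same direction and at the same position, in \emph{every} coordinate. Rows 2--5 force a rigid shape at that position: an identity in a given slot, or an arrow $s$ next to its own $\overline s$. Three of your steps fail against this constraint:
\begin{enumerate}
\item Inserting an identity in the product inserts it in all coordinates, coordinate $i$ included, so ``insert identities in all other coordinates'' is not a move.
\item Row~1 does let you factor an ordinary arrow $(h_i)_i$ differently in different coordinates. But no single move turns $\overline{(t_i)_i}$ into an arrow with $\overline{\operatorname{Id}}$ in chosen coordinates. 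So a coordinate carrying $g_j\ast\overline{t_j}$ at the window cannot simply be ``rewritten into the shape'' $\operatorname{Id}\ast\overline{\operatorname{Id}}$.
\item Rows 1--5 change lengths. After a few steps, the $k$th step of coordinate $i$ no longer sits at a definite position of the product composition.
\end{enumerate}
Your constant $c$ and the ``uniform local lemma'' are therefore asserted rather than proved, and no finite chain from $\mathfrak P$ to $\mathfrak Q$ is actually produced.

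The missing idea is to avoid synchronizing different coordinates altogether. Since $m_i\le D$ and all intermediate compositions have length at most $L+D$, the complete shape of the chain in coordinate $i$ takes only finitely many values. By shape we mean the number of steps, the row, direction and position of each step, and the sign strings of all intermediate compositions. Partition $I=I_1\amalg\cdots\amalg I_r$ according to this shape. Inside one block, all coordinates have equal lengths at every stage and perform the same kind of move at the same place. So the chains assemble verbatim into a chain in $\mathcal B_t=\prod_{i\in I_t}\mathcal A_i$, giving $[\mathfrak P^{(t)}]=[\mathfrak Q^{(t)}]$ in $\mathcal B_t[T_t^{-1}]$ with $T_t=\prod_{i\in I_t}S_i$. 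Lemma~\ref{lem:localization-binary-product}, applied to the finite product $\prod_{t=1}^r\mathcal B_t$, then yields $[\mathfrak P]=[\mathfrak Q]$; this is the paper's argument.

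If you want a direct rewriting proof instead, you need an explicit splitting lemma. For example, take a partition $I=J\amalg K$ and $\sigma,\rho\in\prod_iS_i$ with $\sigma_i=\operatorname{Id}$ for $i\in K$ and $\rho_i=\operatorname{Id}$ for $i\in J$. Then $\overline{\rho\circ\sigma}$ and $\overline\sigma\ast\overline\rho$ are joined by at most seven elementary rewritings, which lets you isolate one group of coordinates from the rest. Even then, you would still need the finitely-many-shapes observation to keep the total number of global moves finite.
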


\begin{proof}
  Suppose first that $\Theta$ is faithful.
  If the stated boundedness condition fails, then there is an $L\in\mathbb N$ such that
  \begin{equation}\label{eq:faithfulness-bound-negation}
    \forall E\subseteq I\text{ finite},\ \
    \forall n\in\mathbb N,\ \
    \exists i\in I\setminus E,\ \
    \exists\mathfrak P_i,\mathfrak Q_i,\ \
    |\mathfrak P_i|,|\mathfrak Q_i|\leq L,\ \
    [\mathfrak P_i]=[\mathfrak Q_i],\ \
    \delta(\mathfrak P_i,\mathfrak Q_i)>n.
  \end{equation}
  One may assume that $L\geq1$.
  Under this assumption, $I$ must be infinite; fix a sequence $\{i_0, i_1, \ldots\}$.
  Set $E_1 = \{i_0\}$ and choose the corresponding $i_1 \in I \setminus E_1$ by \eqref{eq:faithfulness-bound-negation}.
  For $n \geq 1$, define $E_n=\{i_0,\ldots,i_{n-1}\}$ inductively.
  One obtains pairwise distinct elements $i_n\in I$ and corresponding pairs of compositions of arrows $(\mathfrak P_{i_n},\mathfrak Q_{i_n})$.

  For a composition of arrows $\mathfrak R$, let $\operatorname{sgn}(\mathfrak R)$ be the sign string obtained by recording its arrows in their composition order: an arrow in $\mathsf{Mor}(\mathcal A_i)$ is denoted by $+$ and an arrow in $\overline{S_i}$ by $-$.
  Since there are only finitely many sign strings of length at most $L$, one may pass to a subsequence of $\{i_k\}_{k \geq 0}$ and reindex it so that fixed sign strings $\sigma$ and $\tau$ satisfy
  \begin{equation}\label{eq:fixed-sign-pairs}
    \operatorname{sgn}(\mathfrak P_{i_n})=\sigma,\quad
    \operatorname{sgn}(\mathfrak Q_{i_n})=\tau,\quad
    [\mathfrak P_{i_n}]=[\mathfrak Q_{i_n}],\quad
    \delta(\mathfrak P_{i_n},\mathfrak Q_{i_n})>n
    \qquad(n\geq1),
  \end{equation}
  For $i\notin\{i_n\mid n\geq1\}$, choose an object $X_i\in\mathcal A_i$, and let $\mathfrak P_i$ and $\mathfrak Q_i$ be the compositions of arrows built from $\operatorname{Id}_{X_i}$ and $\overline{\operatorname{Id}_{X_i}}$ whose sign strings are $\sigma$ and $\tau$, respectively.
  Let $\mathfrak P = (\mathfrak P_i)_{i\in I}$ and $\mathfrak Q = (\mathfrak Q_i)_{i\in I}$.
  By construction,
  \begin{equation}\label{eq:same-theta-image}
    \Theta[\mathfrak P]
    =([\mathfrak P_i])_{i\in I}
    =([\mathfrak Q_i])_{i\in I}
    =\Theta[\mathfrak Q].
  \end{equation}
  By assumption, $\Theta$ is faithful.
  Therefore $[\mathfrak P]=[\mathfrak Q]$.
  Hence there is a chain of elementary rewritings $\mathfrak P=\mathfrak R_0 \longleftrightarrow\cdots \longleftrightarrow\mathfrak R_m=\mathfrak Q$.
  Projecting this chain to the $i_n$th coordinate and using \eqref{eq:fixed-sign-pairs}, one obtains $n<\delta(\mathfrak P_{i_n},\mathfrak Q_{i_n})\leq m$ for every $n \geq 1$.
  Taking $n=m+1$ gives a contradiction.
  This completes the proof in this direction.

  \medskip

  Conversely, assume that the stated boundedness condition holds.
  Take morphisms $[\mathfrak P],[\mathfrak Q]$ with the same source and target such that $\Theta[\mathfrak P]=\Theta[\mathfrak Q]$.
  Let $\mathfrak P_i,\mathfrak Q_i$ be the $i$th components of $\mathfrak P, \mathfrak Q$, respectively.
  Put $L=\max\{|\mathfrak P|,|\mathfrak Q|\}$.
  Then $|\mathfrak P_i|,|\mathfrak Q_i|\leq L$ and $[\mathfrak P_i]=[\mathfrak Q_i]$ for every $i\in I$.
  Choose $F_L,D_L$ as in the hypothesis, and let
  \begin{equation}\label{eq:uniform-rewriting-bound}
    D=\max\bigl(\{D_L\}\cup
    \{\delta(\mathfrak P_i,\mathfrak Q_i)\mid i\in F_L\}\bigr).
  \end{equation}
  Thus, for each $i\in I$, one may choose a chain of $m_i\leq D$ elementary rewritings
  \begin{equation}\label{eq:coordinate-rewriting-chain}
    \mathfrak P_i=\mathfrak R_{i,0}
    \longleftrightarrow\cdots
    \longleftrightarrow\mathfrak R_{i,m_i}=\mathfrak Q_i.
  \end{equation}
  An elementary rewriting increases the number of arrows by at most $1$, so $|\mathfrak R_{i,j}|\leq L+D$.
  For the $j$th step, let $\nu_{i,j},\varepsilon_{i,j},p_{i,j}$ denote, respectively, the row number of the relation in \eqref{eq:gz-arrow-composition-relations}, the direction in which it is used, and the position where it is applied.
  Record
  \[
    \kappa_i=\left(
    m_i;\
    (\nu_{i,j},\varepsilon_{i,j},p_{i,j})_{1\leq j\leq m_i};\
    (\operatorname{sgn}(\mathfrak R_{i,j}))_{0\leq j\leq m_i}
    \right).
  \]
  Since $m_i\leq D$, there are only five types of elementary relation, and the positions and sign strings all have length at most $L+D$, the invariant $\kappa_i$ takes only finitely many values.
  Partition $I$ into finitely many blocks according to these values, $I=I_1\amalg\cdots\amalg I_r$, where $i,i'\in I_t$ if and only if $\kappa_i=\kappa_{i'}$.
  Put $\mathcal B_t=\prod_{i\in I_t}\mathcal A_i$ and $T_t=\prod_{i\in I_t}S_i$.
  Within each $I_t$, every coordinate uses a relation from the same row, in the same direction, and with the same sign string at every step.
  Therefore the coordinatewise chains in \eqref{eq:coordinate-rewriting-chain} assemble into a chain of elementary rewritings in $\mathcal B_t$:
  \[
    \mathfrak P^{(t)}=\mathfrak R^{(t)}_0
    \longleftrightarrow\cdots
    \longleftrightarrow\mathfrak R^{(t)}_{m_t}=\mathfrak Q^{(t)},
    \qquad m_t=m_i\quad(i\in I_t),
  \]
  where $\mathfrak P^{(t)},\mathfrak Q^{(t)}$ are the projections of $\mathfrak P,\mathfrak Q$ to $I_t$, respectively.
  Hence $[\mathfrak P^{(t)}]=[\mathfrak Q^{(t)}]$ as morphisms in $\mathcal B_t[T_t^{-1}]$.

  From $\prod_{i\in I}\mathcal A_i\cong\prod_{t=1}^r\mathcal B_t$ and Lemma~\ref{lem:localization-binary-product}, one obtains an isomorphism of categories
  \[
    \Phi:
    \left(\prod_{i\in I}\mathcal A_i\right)
    \left[\left(\prod_{i\in I}S_i\right)^{-1}\right]
    \xlongrightarrow{\sim}
    \prod_{t=1}^r\mathcal B_t[T_t^{-1}].
  \]
  Thus $\Phi[\mathfrak P] =\bigl([\mathfrak P^{(t)}]\bigr)_{t=1}^r =\bigl([\mathfrak Q^{(t)}]\bigr)_{t=1}^r =\Phi[\mathfrak Q]$, hence $[\mathfrak P]=[\mathfrak Q]$.
  Therefore $\Theta$ is faithful.
\end{proof}

\begin{proposition}\label{prop:faithfulness-under-fullness}
  Suppose that $\Theta$ is full.
  By Theorem~\ref{thm:product-canonical-fullness}, there exist a finite subset $F_0\subseteq I$ and an integer $N_0\geq1$ such that
  \[
    \ell(\mathfrak P)\leq N_0
    \qquad
    \bigl(i\notin F_0,\
    \mathfrak P\in
    \langle\mathsf{Mor}(\mathcal A_i)\sqcup\overline{S_i}\rangle\bigr).
  \]
  Then $\Theta$ is faithful if and only if there exist a finite subset $F\subseteq I$ and $D\in\mathbb N$ such that $d_i(N_0+1)\leq D$ for every $i\notin F$.
\end{proposition}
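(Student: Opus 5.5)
The forward implication is immediate from Theorem~\ref{thm:product-canonical-faithfulness}. If $\Theta$ is faithful, that theorem applied with $L=N_0+1$ gives a finite $F_{N_0+1}$ and a constant $D_{N_0+1}$, and one takes $F=F_{N_0+1}$ and $D=D_{N_0+1}$. All the work is therefore in the converse. There I plan to show that the single bound at length $N_0+1$, together with the length bound $N_0$ coming from fullness, yields a bound on $d_i(L)$ for every $L$. Fix $F$ and $D$ as in the hypothesis and put $F'=F\cup F_0$, which is finite. I will prove that
\[
  d_i(L)\leq (2L+1)D\qquad (i\notin F',\ L\in\mathbb N).
\]
Then Theorem~\ref{thm:product-canonical-faithfulness} applies with $F_L=F'$ and $D_L=(2L+1)D$, so $\Theta$ is faithful.

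The key step is a local shortening lemma. Let $i\notin F'$ and let $\mathfrak P$ have length $|\mathfrak P|>N_0$. Choose any consecutive subword $\mathfrak W$ of $\mathfrak P$ with $|\mathfrak W|=N_0+1$, so that $\mathfrak P=\mathfrak U\ast\mathfrak W\ast\mathfrak V$. Since $i\notin F_0$, there is $\mathfrak W'$ with the same source and target as $\mathfrak W$, with $[\mathfrak W']=[\mathfrak W]$ and $|\mathfrak W'|=\ell(\mathfrak W)\leq N_0$. Both $\mathfrak W$ and $\mathfrak W'$ have length at most $N_0+1$, and $i\notin F$, so $\delta(\mathfrak W,\mathfrak W')\leq d_i(N_0+1)\leq D$.

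Every elementary rewriting in \eqref{eq:gz-arrow-composition-relations} is stated with arbitrary context $\mathfrak U,\mathfrak V$. Hence a chain of rewritings from $\mathfrak W$ to $\mathfrak W'$ lifts to a chain of the same length from $\mathfrak U\ast\mathfrak W\ast\mathfrak V$ to $\mathfrak U\ast\mathfrak W'\ast\mathfrak V$. This gives a composition of length at most $|\mathfrak P|-1$, reached in at most $D$ steps.

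Iterating the lemma, any $\mathfrak P$ with $|\mathfrak P|\leq L$ reaches some $\mathfrak P^\ast$ with $|\mathfrak P^\ast|\leq N_0$ in at most $LD$ elementary steps; if $|\mathfrak P|\leq N_0$ already, no steps are needed. Now let $[\mathfrak P]=[\mathfrak Q]$ with $|\mathfrak P|,|\mathfrak Q|\leq L$. Shorten both, to $\mathfrak P^\ast$ and $\mathfrak Q^\ast$. These are still equivalent and both have length at most $N_0<N_0+1$, so $\delta(\mathfrak P^\ast,\mathfrak Q^\ast)\leq D$. Since rewriting chains can be reversed, concatenating the three chains gives
\[
  \delta(\mathfrak P,\mathfrak Q)\leq LD+D+LD=(2L+1)D,
\]
which is the claimed bound.

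I expect the main point needing care to be the shortening lemma. It has to be checked that the length bound $\ell\leq N_0$ applies to the arbitrary subword $\mathfrak W$, which holds because the fullness hypothesis quantifies over all compositions in $\langle\mathsf{Mor}(\mathcal A_i)\sqcup\overline{S_i}\rangle$. It also has to be checked that the elementary rewritings of \eqref{eq:gz-arrow-composition-relations} are genuinely closed under adding context. Apart from these two checks, the argument is bookkeeping of the step counts.
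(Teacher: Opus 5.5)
Your proof is correct and follows essentially the paper's route. Both arguments shorten each word to length at most $N_0$ by replacing a piece of length $N_0+1$ in context, at a cost of at most $D$ steps each time, and then join the two short representatives with one more application of $d_i(N_0+1)\leq D$. The paper absorbs one arrow at a time, rewriting $a_k\ast\mathfrak R_{k-1}$, and gets $(2L-1)D$; your arbitrary-window shortening gives $(2L+1)D$, and this difference is immaterial.
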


\begin{proof}
  If $\Theta$ is faithful, take $L=N_0+1$ in Theorem~\ref{thm:product-canonical-faithfulness} and enlarge $F_L$ by $F_0$ to obtain the stated condition.

  \medskip

  Conversely, assume that the stated condition holds.
  Replacing $F$ by $F\cup F_0$, one may assume that $F_0\subseteq F$.
  Fix $i\notin F$ and take a composition of arrows $\mathfrak P=a_m\ast\cdots\ast a_1$ in $\langle\mathsf{Mor}(\mathcal A_i)\sqcup\overline{S_i}\rangle$.
  Set $\mathfrak R_1=a_1$, and use the length bound $N_0$ to choose successively representatives satisfying $[\mathfrak R_k]=[a_k\ast\mathfrak R_{k-1}]$ and $|\mathfrak R_k|\leq N_0$, for $2\leq k\leq m$.
  Since $|a_k\ast\mathfrak R_{k-1}|\leq N_0+1$, one has
  \[
    \delta(a_k\ast\mathfrak R_{k-1},\mathfrak R_k)
    \leq d_i(N_0+1)\leq D.
  \]
  If $m\geq2$, then for $k=2,\ldots,m$, replace $a_k\ast\mathfrak R_{k-1}$ by $\mathfrak R_k$ using at most $D$ elementary rewriting steps.
  This gives
  \[
    \mathfrak P
    =a_m\ast\cdots\ast a_2\ast\mathfrak R_1\xlongleftrightarrow{\leq D}
    a_m\ast\cdots\ast a_3\ast\mathfrak R_2\xlongleftrightarrow{\leq D}\cdots
    \xlongleftrightarrow{\leq D}
    \mathfrak R_m.
  \]
  There are $m-1$ replacements, each using at most $D$ elementary rewriting steps.
  If $m=1$, then $\mathfrak P=\mathfrak R_1$ and no rewriting is needed.
  Thus in all cases
  \[
    [\mathfrak R_m]=[\mathfrak P],
    \qquad |\mathfrak R_m|\leq N_0,
    \qquad
    \delta(\mathfrak P,\mathfrak R_m)\leq(m-1)D.
  \]
  If another composition of arrows $\mathfrak Q=b_n\ast\cdots\ast b_1$ in $\langle\mathsf{Mor}(\mathcal A_i)\sqcup\overline{S_i}\rangle$ satisfies $[\mathfrak P]=[\mathfrak Q]$, applying the same construction to $\mathfrak Q$ gives a composition of arrows $\mathfrak T_n$ satisfying
  \[
    [\mathfrak T_n]=[\mathfrak Q],
    \qquad |\mathfrak T_n|\leq N_0,
    \qquad
    \delta(\mathfrak Q,\mathfrak T_n)\leq(n-1)D.
  \]
  Since $[\mathfrak R_m]=[\mathfrak T_n]$ and $|\mathfrak R_m|,|\mathfrak T_n|\leq N_0$, one has $\delta(\mathfrak R_m,\mathfrak T_n)\leq d_i(N_0+1)\leq D$.
  Hence
  \[
    \delta(\mathfrak P,\mathfrak Q)
    \leq
    \delta(\mathfrak P,\mathfrak R_m)
    +\delta(\mathfrak R_m,\mathfrak T_n)
    +\delta(\mathfrak T_n,\mathfrak Q)\leq(m-1)D+D+(n-1)D.
  \]
  Thus $d_i(L)\leq(2L-1)D$ for every $L\geq1$ and every $i\notin F$.
  By Theorem~\ref{thm:product-canonical-faithfulness}, $\Theta$ is faithful.
\end{proof}

\begin{example}\label{ex:product-localization-not-faithful}
  Let $\mathcal C$ be the full subcategory of $\mathbb F_2$-vector spaces whose objects are $\{\mathbb F_2^d\}_{d \geq 0}$.
  For $d\geq0$, let
  \[
    j_d:\mathbb F_2^d\longrightarrow\mathbb F_2^{d+1},
    \qquad
    (x_1,\ldots,x_d) \longmapsto (x_1,\ldots,x_d,0),
  \]
  and take $S=\{j_d\mid d\geq0\} \cup \{\text{all isomorphisms}\}$.
  The following canonical functor is not faithful:
  \[
    \Theta:
    (\mathcal C^{\mathbb N_{>0}})
    [(S^{\mathbb N_{>0}})^{-1}]
    \longrightarrow
    (\mathcal C[S^{-1}])^{\mathbb N_{>0}}.
  \]
\end{example}

\begin{proof}
  Let $X=(\mathbb F_2^n)_{n\geq1}$, and take the morphisms $u=(\operatorname{Id}_{\mathbb F_2^n})_{n\geq1}$ and $v=(0_{n\times n})_{n\geq1}$.
  It will be shown that $\Theta[u]=\Theta[v]$ but $[u]\ne[v]$.

  \medskip

  On the one hand, for every $n\geq1$, the zero morphism $z_n:0\longrightarrow\mathbb F_2^n$ factors as
  \[
    0=\mathbb F_2^0
    \xrightarrow{j_0}\mathbb F_2
    \xrightarrow{j_1}\mathbb F_2^2
    \longrightarrow\cdots
    \xrightarrow{j_{n-1}}\mathbb F_2^n.
  \]
  Hence $[z_n]$ is invertible in $\mathcal C[S^{-1}]$.
  Cancelling $[z_n]$ from $\operatorname{Id}_{\mathbb F_2^n}\circ z_n =z_n =0_{n\times n}\circ z_n$ gives $[\operatorname{Id}_{\mathbb F_2^n}] =[0_{n\times n}]$.
  Therefore
  \[
    \Theta[u]
    =([\operatorname{Id}_{\mathbb F_2^n}])_{n\geq1}
    =([0_{n\times n}])_{n\geq1}
    =\Theta[v].
  \]

  \medskip

  On the other hand, define a relation on each Hom set of $\mathcal C^{\mathbb N_{>0}}$ by
  \[
    (f_n)_{n\geq1}\sim(g_n)_{n\geq1}
    \quad\Longleftrightarrow\quad
    \sup_{n\geq1}\operatorname{rank}(f_n-g_n)<\infty.
  \]
  By subadditivity of rank, this is an equivalence relation.
  For composable families of matrices, one has
  \[
    \operatorname{rank}(g_nf_n-g_n'f_n')
    =\operatorname{rank}\bigl( g_n(f_n-f_n')+(g_n-g_n')f_n'\bigr)\leq\operatorname{rank}(f_n-f_n')+\operatorname{rank}(g_n-g_n').
  \]
  Thus $\sim$ is compatible with composition and hence is a congruence on the category.
  Let $F:\mathcal C^{\mathbb N_{>0}} \longrightarrow \mathcal{C}^{\mathbb N_{>0}}/_\sim = \mathcal{D}$ be the corresponding quotient functor.
  For every $d\geq0$, take the projection $p_d=(I_d\ \ 0)$.
  Then $p_dj_d=I_d$, $j_dp_d= \begin{psmallmatrix}I_d&0\\0&0\end{psmallmatrix}$, and $\operatorname{rank}(j_dp_d-I_{d+1})=1$.
  Take any morphism $s=(s_n)_{n\geq1}:Y\longrightarrow Z$ in $S^{\mathbb N_{>0}}$.
  Coordinatewise, set
  \[
    r_n=
    \begin{cases}
      s_n^{-1}, & s_n\text{ is an isomorphism}, \\
      p_d,      & s_n=j_d.
    \end{cases}
  \]
  Then $r=(r_n)_{n\geq1}:Z\longrightarrow Y$ satisfies $rs=\operatorname{Id}_Y$, and
  \[
    \sup_{n\geq1}\operatorname{rank}
    (s_nr_n-\operatorname{Id}_{Z_n})\leq1.
  \]
  It follows that $F(r)F(s)=\operatorname{Id}_{FY}$ and $F(s)F(r)=\operatorname{Id}_{FZ}$.
  Therefore $F$ sends every morphism in $S^{\mathbb N_{>0}}$ to an isomorphism.
  By the universal property of localization, $F$ induces a functor $\widetilde F: (\mathcal C^{\mathbb N_{>0}}) [(S^{\mathbb N_{>0}})^{-1}] \longrightarrow \mathcal{D}$.
  But
  \[
    \operatorname{rank}(u_n-v_n)
    =\operatorname{rank}(I_n)=n,
    \qquad
    \sup_{n\geq1}\operatorname{rank}(u_n-v_n)=\infty,
  \]
  so $F(u)\ne F(v)$, and hence $[u]\ne[v]$.
  Together with $\Theta[u]=\Theta[v]$, $\Theta$ is not faithful.
\end{proof}

\begin{theorem}\label{cor:fraction-calculus-preservation}
  Let $\mathcal{A}$ be a category, assume that the class of morphisms $S \subseteq \mathsf{Mor}(\mathcal{A})$ contains all identity morphisms, and let $Q : \mathcal{A} \longrightarrow \mathcal{A}[S^{-1}]$ be the localization.
  Suppose that the category $\mathcal{A}$ and the class of morphisms $S$ satisfy the following two conditions:
  \begin{align*}
    N = \sup \big\{\ell(\mathfrak P) \mid \mathfrak P\in \langle\mathsf{Mor}(\mathcal A)\sqcup\overline{S}\rangle\big\} < \infty, \\
    \sup\big\{ \delta(\mathfrak P,\mathfrak Q) \mid [\mathfrak P]=[\mathfrak Q], \ \mathfrak P, \mathfrak Q\in \langle\mathsf{Mor}(\mathcal A)\sqcup\overline{S}\rangle, \ |\mathfrak P|, |\mathfrak Q| \leq N+1\big\} < \infty.
  \end{align*}
  Then the following statements hold:
  \begin{enumerate}[label=\textup{(\arabic*)}]
    \item If $\mathcal{A}$ has $I$-indexed coproducts and $S$ is closed under $I$-indexed coproducts, then $\mathcal{A}[S^{-1}]$ also has $I$-indexed coproducts and $Q$ preserves them;
    \item If $\mathcal{A}$ has $I$-indexed products and $S$ is closed under $I$-indexed products, then $\mathcal{A}[S^{-1}]$ also has $I$-indexed products and $Q$ preserves them.
  \end{enumerate}
\end{theorem}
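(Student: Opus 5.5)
The plan is to mimic the proof of Proposition~\ref{prop:finite-structure-localization}, replacing the binary coproduct--diagonal adjunction by its $I$-indexed version. The finite isomorphism of Lemma~\ref{lem:localization-binary-product} is replaced by the statement that $\Theta$ is an isomorphism when all $(\mathcal A_i,S_i)$ equal $(\mathcal A,S)$. It suffices to treat (1), since (2) follows by passing to opposite categories. Reversing arrows sends $\mathfrak P$ to a composition of the same length and elementary rewritings to elementary rewritings, so both hypotheses are self-dual. If $\mathcal A$ is empty or $I=\varnothing$ the claim is trivial or reduces to Lemma~\ref{lem:nullary-preservation}; so assume $\mathcal A$ is nonempty.

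\textbf{Step 1: $\Theta$ is an isomorphism.} Take $\mathcal A_i=\mathcal A$ and $S_i=S$ for all $i\in I$. The first hypothesis gives $\ell(\mathfrak P)\le N$ for every $i$ and every $\mathfrak P$, with $F=\varnothing$, so Theorem~\ref{thm:product-canonical-fullness} shows that $\Theta$ is full. We may take $N_0=\max(N,1)$. The second hypothesis bounds $d_i(N+1)$ uniformly in $i$ by some $D$.
\begin{itemize}
\item If $N\ge1$, Proposition~\ref{prop:faithfulness-under-fullness} with $F=\varnothing$ gives faithfulness.
\item If $N=0$, every morphism of $\mathcal A[S^{-1}]$ equals some identity. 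Then $d_i(2)$ is still bounded: any $\mathfrak P$ of length at most $2$ is rewritten to the length-one representative with few steps, by the same telescoping argument used in Proposition~\ref{prop:faithfulness-under-fullness}.
\end{itemize}
Since $\Theta$ is bijective on objects, it is then an isomorphism of categories. Hence $Q^I:\mathcal A^I\to\mathcal A[S^{-1}]^I$ is, up to this isomorphism, the localization of $\mathcal A^I$ at $S^I$, by Lemma~\ref{lem_loc_uni} and the factorization $Q^I=\Theta\circ Q_{\mathcal A^I}$.

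\textbf{Step 2: descend the adjunction.} The functor $\coprod_I:\mathcal A^I\to\mathcal A$ is left adjoint to the diagonal $\Delta:\mathcal A\to\mathcal A^I$. We have $\Delta(S)\subseteq S^I$ because $S$ is sent coordinatewise, and $\coprod_I(S^I)\subseteq S$ by the closure hypothesis. By the Gabriel--Zisman lemma on adjunctions (\cite[Chapter~I, \S~1, Lemma~1.2]{GabrielZisman}, \cite[Lemma~1.1.6]{KrauseHomologicalTheory}), the adjunction descends to an adjunction between $\mathcal A^I[(S^I)^{-1}]$ and $\mathcal A[S^{-1}]$. Transporting along $\Theta$ gives functors $\coprod_S:\mathcal A[S^{-1}]^I\to\mathcal A[S^{-1}]$ and $\Delta_S$ with $\coprod_S\dashv\Delta_S$, satisfying $Q\circ\coprod_I=\coprod_S\circ Q^I$ and $Q^I\circ\Delta=\Delta_S\circ Q$. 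From the second identity, together with $\Theta$ being an isomorphism and $Q$ being the identity on objects, $\Delta_S$ agrees with the diagonal of $\mathcal A[S^{-1}]$ on objects and on morphisms.

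\textbf{Step 3: conclude.} A left adjoint to the diagonal is exactly an $I$-indexed coproduct functor, so $\mathcal A[S^{-1}]$ has $I$-indexed coproducts. The identity $Q\circ\coprod_I=\coprod_S\circ Q^I$ says that $Q(\coprod X_i)$ is the coproduct of the $QX_i$. It remains to check that the structure maps are $Q(e_i)$. This holds because the unit of the descended adjunction is the image of the original unit under $Q^I$, which is part of the cited lemma.

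The main obstacle is Step 1. One has to verify carefully that the hypotheses of Theorem~\ref{thm:product-canonical-faithfulness} follow, and in particular handle the degenerate case $N=0$ and the identification $N_0\ge1$. One must also make sure that the isomorphism $\Theta$ is compatible with $Q^I$, so that the descended right adjoint really is the diagonal.
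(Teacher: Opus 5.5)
Your proposal is correct and follows the paper's own proof. The hypotheses make $\Theta$ an isomorphism via Theorem~\ref{thm:product-canonical-fullness} and Proposition~\ref{prop:faithfulness-under-fullness}, so $Q^I$ serves as the localization of $\mathcal A^I$ at $S^I$, and the adjunction $\coprod_I\dashv\Delta^I$ then descends by the Gabriel--Zisman lemma with the descended right adjoint being the diagonal; your separate $N=0$ branch is unnecessary, since every composition of arrows contains at least one arrow, so $N\geq1$ automatically once $\mathcal A$ is nonempty.
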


\begin{proof}
  The proof is the same as that of Proposition~\ref{prop:finite-structure-localization}.
  It suffices to consider the coproduct case.
  If $\mathcal A$ is empty, the assertion is immediate, so assume that $\mathcal A$ is nonempty.
  Assume that $\mathcal{A}$ satisfies the two conditions above, that $\mathcal{A}$ has $I$-indexed coproducts, and that $S$ is closed under $I$-indexed coproducts.
  The coproduct functor $\coprod_I : \mathcal{A}^I \longrightarrow \mathcal{A}$ and the diagonal functor $\Delta^I : \mathcal{A} \longrightarrow \mathcal{A} ^I$ are respectively left and right adjoint.
  In particular, the right adjoint $\Delta^I$ sends $S$ to $S^I$; because $S$ is closed under $I$-indexed coproducts, the left adjoint $\coprod_I$ sends $S^I$ to $S$.
  By \cite[Lemma~1.1.6, p.~9]{KrauseHomologicalTheory}, or by the primary source \cite[Chapter~I, \S~1, Lemma~1.2, p.~7]{GabrielZisman}, the adjunction $\coprod_I \dashv \Delta^I$ therefore induces an adjunction on the localized categories
  \[
    \xymatrix@C=30mm@R=10mm{
    \mathcal A^I
    \ar@<0.8ex>[r]^{\coprod_I}
    \ar[d]_{Q^I}
    &
    \mathcal A
    \ar@<0.8ex>[l]^{\Delta^I}
    \ar[d]^{Q}
    \\
    (\mathcal A[S^{-1}])^I
    \ar@<0.8ex>[r]^-{\widehat {\coprod_I}}
    &
    \mathcal A[S^{-1}]
    \ar@<0.8ex>[l]^-{\widehat{\Delta^I}}
    } \qquad \xymatrix@R=10mm{
    Q \circ \coprod_I = \widehat{\coprod_I} \circ Q^I,\\
    Q^I \circ \Delta^I = \widehat{\Delta^I} \circ Q.
    }
  \]
  By the hypothesis, Theorem~\ref{thm:product-canonical-fullness}, and Proposition~\ref{prop:faithfulness-under-fullness}, the canonical functor $(\mathcal{A}^I)[(S^I)^{-1}] \longrightarrow (\mathcal{A}[S^{-1}])^I$ is an isomorphism of categories, so one may regard $Q^I$ as the localization functor of $\mathcal A^I$ at $S^I$.
  It is clear that $\widehat{\Delta^I}$ is still the diagonal functor, and $\widehat{\coprod_I}$ is its left adjoint.
  Therefore $\mathcal A[S^{-1}]$ has $I$-indexed coproducts, and $Q\circ\coprod_I=\widehat{\coprod_I}\circ Q^I$ shows that $Q$ preserves them.
\end{proof}

\section{Two applications for discrete indexing categories}\label{sec:discrete-applications}

This section gives two classes of sufficient conditions under which the canonical functor $\Theta_I$ is an equivalence of categories.
Throughout this section, $I$ is a set.

\subsection{calculus of fractions}

Gabriel and Zisman give the axioms for a calculus of fractions, fraction representations of localized morphisms, and a criterion for equality of two fractions in Chapter~I, \S\S~2.2--2.4 \cite[Chapter~I, \S\S~2.2--2.4, pp.~12--14]{GabrielZisman}.

\begin{definition}
  Let $\mathcal{A}$ be a category and $S \subseteq \mathsf{Mor}(\mathcal{A})$ a class of morphisms.
  The class $S$ is said to admit a right calculus of fractions if it contains all identity morphisms, is closed under composition, and satisfies the following conditions:
  \begin{enumerate}[label=\textup{(\arabic*)}]
    \item for $f:X\longrightarrow Y$ and $s:Y'\longrightarrow Y$, if $s\in S$, then there is a commutative square of the following form, with $t\in S$:
          \[
            \xymatrix@C=12mm@R=6mm{
            X'\ar[r]^{f'}\ar[d]_t&Y'\ar[d]^s\\
            X\ar[r]_f&Y.
            }
          \]
    \item for morphisms $f,g:X\longrightarrow Y$, if there is an $s\in S$ such that $s\circ f=s\circ g$, then there is a $t\in S$ such that $f\circ t=g\circ t$.
  \end{enumerate}
  A left calculus of fractions is obtained by dualizing these conditions.
\end{definition}

\begin{corollary}\label{thm:fraction-calculus-application}
  Let $\mathcal A$ be a category, $S\subseteq\mathsf{Mor}(\mathcal A)$ a class of morphisms, and $I$ a set.
  Let
  \[
    Q:\mathcal A\longrightarrow\mathcal A[S^{-1}],
    \qquad
    Q_I:\mathcal A^I\longrightarrow
    (\mathcal A^I)[(S^I)^{-1}].
  \]
  be the localizations.
  If $S$ admits a right calculus of fractions in $\mathcal{A}$, respectively a left calculus of fractions, then $S^I$ admits a right calculus of fractions in $\mathcal A^I$, respectively a left calculus of fractions, and the canonical functor obtained by factoring $Q^I$ through $Q_I$ is an isomorphism:
  \[
    \Theta_I:
    (\mathcal A^I)[(S^I)^{-1}]
    \longrightarrow(\mathcal A[S^{-1}])^I,
    \qquad
    \Theta_I\circ Q_I=Q^I.
  \]

  Assume that $S$ admits a left or right calculus of fractions.
  Furthermore, if $\mathcal A$ has $I$-indexed (co)products and $S$ is closed under $I$-indexed (co)products, then $\mathcal A[S^{-1}]$ has $I$-indexed (co)products and $Q$ preserves them.
\end{corollary}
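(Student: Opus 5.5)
We treat the right calculus of fractions; the left case follows by passing to opposite categories, since $(\mathcal A^{\mathrm{op}})^I=(\mathcal A^I)^{\mathrm{op}}$ and the localizations are compatible with $(-)^{\mathrm{op}}$.

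\emph{Step 1: $S^I$ admits a right calculus of fractions.} This is a coordinatewise check. $S^I$ contains all identities and is closed under composition because $S$ is. For the Ore condition, given $f=(f_i)$ and $s=(s_i)\in S^I$ with common target, choose for each $i$ a commutative square $f_i\circ t_i=s_i\circ f_i'$ with $t_i\in S$, using the axiom of choice over the set $I$, and assemble these into $t=(t_i)$. For the cancellation condition, if $s\circ f=s\circ g$ then $s_i\circ f_i=s_i\circ g_i$ for every $i$, so for each $i$ we choose $t_i\in S$ with $f_i\circ t_i=g_i\circ t_i$ and set $t=(t_i)$.

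\emph{Step 2: $\Theta_I$ is an isomorphism.} We apply Theorem~\ref{cor:fraction-calculus-preservation} in its underlying form, that is, Theorem~\ref{thm:product-canonical-fullness} together with Proposition~\ref{prop:faithfulness-under-fullness}, to the constant family $(\mathcal A,S)$. Since $\Theta_I$ is already bijective on objects, it suffices to verify the two quantitative conditions.

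The first condition asks for $N<\infty$. By Gabriel--Zisman, every morphism of $\mathcal A[S^{-1}]$ is represented by a right fraction $f\ast\overline s$, so $\ell(\mathfrak P)\le 2$ and $N\le2$.

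The second condition asks for a uniform bound on $\delta(\mathfrak P,\mathfrak Q)$ for equivalent words of length at most $3$. We make this uniform as follows. First, every word of length at most $3$ is rewritten to a fraction of a fixed shape in a bounded number of steps. For each pattern of signs there is a fixed recipe: pad with identities, compose adjacent morphisms of $\mathcal A$, and replace a subword $\overline s\ast f$ by $f'\ast\overline t$ using an Ore square $s f'=f t$. This last replacement is realized by the chain
\[
\overline s\ast f\to \overline s\ast f\ast t\ast\overline t\to\overline s\ast s\ast f'\ast\overline t\to f'\ast\overline t,
\]
which inserts $\operatorname{Id}$, splits the composites $f\circ t$ and $s\circ f'$, and cancels $\overline s\ast s$. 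Each such chain has boundedly many steps, independently of the chosen morphisms. Second, when two fractions $f\ast\overline s$ and $g\ast\overline t$ are equal, Gabriel--Zisman's criterion supplies $u,v$ with $s u=t v\in S$ and $f u=g v$. Then
\[
f\ast\overline s\;\to\; f\ast u\ast\overline{su}\;=\;g\ast v\ast\overline{tv}\;\to\; g\ast\overline t,
\]
again in a bounded number of steps. The middle equality is obtained by composing $f\ast u$ into $f\circ u=g\circ v$ and splitting again. The outer rewritings use the relation $[\overline s]=[u\ast\overline{su}]$, which is itself a bounded chain: insert $su\ast\overline{su}$, split $s\circ u$, and cancel $\overline s\ast s$.

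\emph{Main obstacle.} The delicate point is Step 2's uniformity: every rewriting chain must have length bounded by a constant that depends only on the sign pattern and not on $\mathcal A$. In particular, the derivation of the fraction equality criterion must use only finitely many elementary moves, each of which is independent of the specific morphisms involved. We therefore plan to write out these finitely many templates explicitly.

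\emph{Step 3: the (co)product statement.} Given Step 2, the final assertion follows exactly as in the proof of Theorem~\ref{cor:fraction-calculus-preservation}. The adjunctions $\coprod_I\dashv\Delta^I$ and $\Delta^I\dashv\prod_I$ descend to the localizations, since $S$ is closed under $I$-indexed (co)products. Because $\Theta_I$ identifies $(\mathcal A^I)[(S^I)^{-1}]$ with $(\mathcal A[S^{-1}])^I$, the descended functor is the diagonal of $\mathcal A[S^{-1}]$. Its descended adjoint therefore provides the $I$-indexed (co)products, and these are preserved by $Q$.
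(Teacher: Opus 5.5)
Your proposal is correct and follows the paper's route. The steps match: coordinatewise verification of the axioms, $\ell\le 2$ from the right-fraction representation, a uniform bound on $d(3)$ from normalizing words of length at most $3$ to right fractions and refining via the Gabriel--Zisman equality criterion, then the descent of $\coprod_I\dashv\Delta^I$ as in Theorem~\ref{cor:fraction-calculus-preservation}. The paper only makes the constants explicit ($\le 6$ steps per refinement, $\le 21$ per normalization, hence $d_i(3)\le 54$).

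One small addition for when you write out the templates. Your listed moves (padding, composing, Ore swap) cannot merge two adjacent inverse arrows, as required for the pattern $--$ or for $-+-$ after a swap. Your own chain $\overline{s}\to t\ast\overline{s\circ t}$, followed by cancelling $\overline{t}\ast t$, does turn $\overline{t}\ast\overline{s}$ into $\overline{s\circ t}$ in boundedly many steps, since $S$ is closed under composition.
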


\begin{proof}
  It suffices to prove the case of a right calculus of fractions; the left calculus case is dual.
  The axioms for a calculus of fractions can be verified coordinatewise, so $S^I$ clearly admits a right calculus of fractions.

  \medskip
  By the right calculus of fractions, every $\mathfrak P\in\langle\mathsf{Mor}(\mathcal A)\sqcup\overline S\rangle$ is equivalent to $f\ast\overline s$, where $s\in S$ \cite[Chapter~I, \S~2.4, p.~14]{GabrielZisman}.
  Taking $F=\varnothing$ and $N=2$ in Theorem~\ref{thm:product-canonical-fullness}, $\Theta_I$ is full.

  \medskip

  By Proposition~\ref{prop:faithfulness-under-fullness}, it remains only to verify that $d_i(3)$ has a uniform upper bound in order to show that $\Theta_I$ is faithful.
  First consider the rewritings needed to refine a fraction.
  If $s:Z\longrightarrow X$, $f:Z\longrightarrow Y$, $a:T\longrightarrow Z$, and $s,s\circ a\in S$, then
  \[
    f\ast\overline s
    \xlongleftrightarrow{\leq2}
    f\ast\overline s\ast(s\circ a)\ast\overline{s\circ a}\xlongleftrightarrow{\leq4}
    (f\circ a)\ast\overline{s\circ a}.
  \]
  Thus every such refinement uses at most $6$ elementary rewriting steps.
  If $[f\ast\overline s]=[g\ast\overline t]$, then the equality criterion for right fractions \cite[Chapter~I, \S\S~2.3--2.4, pp.~13--14]{GabrielZisman} gives $a,b$ such that
  \[
    s\circ a=t\circ b\in S,
    \qquad f\circ a=g\circ b.
  \]
  Each of the two right fractions reaches this common refinement in at most $6$ steps, so the number of elementary rewriting steps needed to pass to a common denominator is bounded by $\delta(f\ast\overline s,g\ast\overline t)\leq12$.

  A single arrow can be rewritten as a right fraction in at most $3$ elementary steps.
  Adding a forward arrow to the left of $f\ast\overline s$ requires only one composition; when adding $\overline t$, the axioms for right fractions give $a\in S$ and $h$ such that $f\circ a=t\circ h$, and hence
  \[
    \overline t\ast f\ast\overline s
    \xlongleftrightarrow{\leq6}
    \overline t\ast(f\circ a)\ast\overline{s\circ a}=\overline t\ast(t\circ h)\ast\overline{s\circ a}
    \xlongleftrightarrow{\leq3}
    h\ast\overline{s\circ a}.
  \]
  Therefore, a composition of at most $3$ arrows can be rewritten as a right fraction in at most $3+2\cdot9=21$ steps.
  For arbitrary $\mathfrak P,\mathfrak Q\in \langle\mathsf{Mor}(\mathcal A)\sqcup\overline S\rangle$, one has
  \[
    \begin{gathered}
      [\mathfrak P]=[\mathfrak Q],\quad
      |\mathfrak P|,|\mathfrak Q|\leq3
      \quad\Longrightarrow\quad
      \delta(\mathfrak P,\mathfrak Q)\leq21+12+21=54.
    \end{gathered}
  \]
  Thus $d_i(3)\leq54$ for all $i\in I$.
  Taking $N_0=2$, $F_0=F=\varnothing$, and $D=54$ in Proposition~\ref{prop:faithfulness-under-fullness}, one concludes that $\Theta_I$ is faithful.
  Since $\Theta_I$ is bijective on objects, it is an isomorphism of categories.

  \medskip

  Finally, $\ell(\mathfrak P)\leq2$ and $d_i(3)\leq54$ verify the two conditions in Theorem~\ref{cor:fraction-calculus-preservation}.
  Hence, whenever the corresponding $I$-indexed (co)products exist in $\mathcal A$ and $S$ is closed under them, they exist in $\mathcal A[S^{-1}]$ and are preserved by $Q$.
\end{proof}

\subsection{Model structures}

The term model structure below is used in the sense of Definition~\ref{def:model-structure}.
A model structure on a category $\mathcal A$ is determined by three classes of morphisms $(\mathsf{Cofib},\mathsf{Fib},\mathsf{Weq})$ containing all identity morphisms and closed under composition; their elements are called cofibrations, fibrations, and weak equivalences, respectively.
Morphisms in $\mathsf{TCofib} = \mathsf{Cofib}\cap\mathsf{Weq}$ are called trivial cofibrations, and morphisms in $\mathsf{TFib} = \mathsf{Fib}\cap\mathsf{Weq}$ are called trivial fibrations.
The following axioms for $(\mathsf{Cofib}, \mathsf{Fib}, \mathsf{Weq})$ come from Quillen's definition of a closed model category \cite[Chapter~I, \S~1]{Quillen}; see also \cite[Section~3, Definition~3.3]{DwyerSpalinski}.

\begin{definition}\label{def:model-structure}
  A triple $(\mathsf{Cofib},\mathsf{Fib},\mathsf{Weq})$ is called a model structure on a category $\mathcal A$ if the following conditions hold.
  \begin{enumerate}
    \item[CM1] For every composable pair of morphisms $g \circ f$, if two of $(f,g,g \circ f)$ are weak equivalences, then so is the third.
    \item[CM2] The three classes of morphisms $(\mathsf{Cofib},\mathsf{Fib},\mathsf{Weq})$ are closed under retracts.
          A morphism $f' : X' \longrightarrow Y'$ is called a retract of $f : X \longrightarrow Y$ in the arrow category if there exist morphisms represented by the dotted arrows that make the following diagram commute:
          \[
            \xymatrix@R=15pt{
            X' \ar@{.>}[r] \ar@{->}[d]_-{f'} \ar@/^/@{=}[rr]^{\operatorname{Id}_{X'}} & X \ar@{.>}[r] \ar@{->}[d]_-{f} & X' \ar@{->}[d]^-{f'} \\
            Y' \ar@{.>}[r] \ar@/_/@{=}[rr]_{\operatorname{Id}_{Y'}} & Y \ar@{.>}[r] & Y'
            }
          \]
    \item[CM3] For every commutative square
          \[
            \xymatrix@C=14mm@R=7mm{
            X\ar[r]^a\ar[d]_i&E\ar[d]^p\\
            Y\ar[r]_b\ar@{..>}[ur]^h&B
            },
          \]
          if either $i\in\mathsf{Cofib}$ and $p \in \mathsf{TFib}$, or $i \in \mathsf{TCofib}$ and $p \in \mathsf{Fib}$, there exists $h$ such that $h\circ i=a$ and $p\circ h=b$.
    \item[CM4] Every morphism $f : X \longrightarrow Y$ has factorizations of the following two forms:
          \begin{enumerate}
            \item[(i)] there exist $i : X \longrightarrow Z \in \mathsf{Cofib}$ and $p : Z \longrightarrow Y \in \mathsf{TFib}$ such that $f = p \circ i$;
            \item[(ii)] there exist $i' : X \longrightarrow Z' \in \mathsf{TCofib}$ and $p' : Z' \longrightarrow Y \in \mathsf{Fib}$ such that $f = p' \circ i'$.
          \end{enumerate}
  \end{enumerate}
\end{definition}

Quillen's original definition also requires the category to have all finite limits and finite colimits \cite[Chapter~I, \S~1]{Quillen}.
Egger proved that the existence of finite products and finite coproducts is sufficient for constructing the homotopy category \cite[p.~1 and Theorem~3.2]{EggerModelCategories}.
In what follows, $\mathcal A$ is assumed to have finite products and finite coproducts.
In particular, $\mathcal A$ has an initial object $\bot$ and a terminal object $\top$.

\begin{definition}
  Let the full subcategories of cofibrant objects, fibrant objects, trivially cofibrant objects, and trivially fibrant objects be denoted, respectively, by
  \[
    \begin{aligned}
      \mathcal C
       & =\{X\in\mathcal A\mid \bot\longrightarrow X\in\mathsf{Cofib}\},  &
      \mathcal F
       & =\{X\in\mathcal A\mid X\longrightarrow\top\in\mathsf{Fib}\},       \\
      \mathsf T\mathcal C
       & =\{X\in\mathcal A\mid \bot\longrightarrow X\in\mathsf{TCofib}\}, &
      \mathsf T\mathcal F
       & =\{X\in\mathcal A\mid X\longrightarrow\top\in\mathsf{TFib}\}.
    \end{aligned}
  \]
  For the definitions of cofibrant and fibrant objects, see \cite[Section~3, Remark~3.4]{DwyerSpalinski}; trivially cofibrant and trivially fibrant objects are defined in the same way.
\end{definition}

\begin{lemma}\label{lem:model-replacements}
  Let $\mathcal A$ carry a model structure.
  For every object $X\in\mathcal A$, there exist a cofibrant object $QX$ and a trivial fibration $p_X : QX \longrightarrow X$, as well as a fibrant object $RX$ and a trivial cofibration $i_X : X \longrightarrow RX$.
\end{lemma}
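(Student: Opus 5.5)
The plan is to apply the factorization axiom CM4 to the two canonical morphisms out of the initial object and into the terminal object. Since $\mathcal A$ is assumed to have finite coproducts, it has an initial object $\bot$ (the empty coproduct). Dually, since it has finite products, it has a terminal object $\top$.

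For the cofibrant replacement, fix $X\in\mathcal A$ and let $u_X:\bot\longrightarrow X$ be the unique morphism. By CM4(i), factor it as $u_X=p\circ i$ with $i:\bot\longrightarrow Z$ in $\mathsf{Cofib}$ and $p:Z\longrightarrow X$ in $\mathsf{TFib}$. Set $QX=Z$ and $p_X=p$. The morphism $\bot\longrightarrow QX$ is the unique morphism out of $\bot$, so it equals $i$ and lies in $\mathsf{Cofib}$. Hence $QX\in\mathcal C$, and $p_X$ is a trivial fibration by construction.

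The fibrant replacement is dual. Let $t_X:X\longrightarrow\top$ be the unique morphism. By CM4(ii), factor it as $t_X=p'\circ i'$ with $i':X\longrightarrow Z'$ in $\mathsf{TCofib}$ and $p':Z'\longrightarrow\top$ in $\mathsf{Fib}$. Set $RX=Z'$ and $i_X=i'$. By uniqueness of morphisms into $\top$, the morphism $RX\longrightarrow\top$ equals $p'$ and is a fibration, so $RX\in\mathcal F$.

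There is no real obstacle. The only point to check is that the factorizations are applied to the canonical maps, so that the required morphisms $\bot\to QX$ and $RX\to\top$ coincide with the factors produced by CM4. This holds by uniqueness. Note that the notation $QX$ here is the cofibrant replacement, unrelated to the localization functor.
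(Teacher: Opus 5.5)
Your proposal is correct and follows the paper's own proof, which simply applies the two factorizations of (CM4) to the unique morphisms $\bot\longrightarrow X$ and $X\longrightarrow\top$. Your additional remarks, that $\bot$ and $\top$ exist because of the standing finite (co)product hypothesis and that the factors produced by (CM4) coincide with the canonical maps by uniqueness, just make explicit what the paper leaves implicit.
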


\begin{proof}
  Apply the factorization axioms in (CM4) to the morphisms $\bot\longrightarrow X$ and $X \longrightarrow \top$.
\end{proof}

For every object $X$, fix such factorizations once and for all, and consistently write
\[
  p_X:QX\longrightarrow X,\qquad i_X:X\longrightarrow RX
\]
for a cofibrant replacement and a fibrant replacement, respectively.
This notation follows \cite[Section~5, Lemmas~5.1 and~5.3, pp.~24--25]{DwyerSpalinski}.
Since the factorization axiom does not require the factorizations to be functorial, $Q$ and $R$ here are initially only choices on objects; for a given morphism, the required lift is chosen separately.

\medskip

Quillen defined the homotopy category as the Gabriel--Zisman localization at the weak equivalences and proved that it is equivalent to the category of homotopy classes between cofibrant-fibrant objects \cite[Chapter~I, \S~1, Theorem~1, pp.~24--27]{Quillen}.
Under the hypotheses of finite products and finite coproducts used in this paper, Egger later proved that the same conclusion remains valid \cite[Theorem~3.2]{EggerModelCategories}.

\begin{theorem}\label{thm:quillen-homotopy-category}
  Suppose that $\mathcal A$ has finite products and finite coproducts and carries a model structure $(\mathsf{Cofib},\mathsf{Fib},\mathsf{Weq})$.
  Let $\gamma:\mathcal A\longrightarrow \operatorname{Ho}(\mathcal A) =\mathcal A[\mathsf{Weq}^{-1}]$ be the localization.
  Then, for every $X \in \mathcal{C}$ and $Y \in \mathcal{F}$, every morphism in $\mathrm{Hom}_{\mathsf{Ho}(\mathcal{A})}(X,Y)$ has the form $\gamma (f)$ for some $f \in \mathrm{Hom}_\mathcal{A}(X,Y)$.
  Moreover, two morphisms $f,g:X\longrightarrow Y$ are equal in $\operatorname{Ho}(\mathcal A)$ if and only if they are homotopic.
\end{theorem}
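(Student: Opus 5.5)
The plan is to follow Quillen's argument for \cite[Chapter~I, \S~1, Theorem~1]{Quillen}, checking at each step that only finite products and finite coproducts are used, as Egger does. I would first set up the homotopy relations on $\mathrm{Hom}_{\mathcal A}(X,Y)$. Left homotopy uses a cylinder object $X\sqcup X\to X\wedge I\to X$, obtained by factoring the codiagonal by (CM4)(i). Right homotopy uses a path object $Y\to Y^I\to Y\times Y$, obtained by factoring the diagonal by (CM4)(ii). Both constructions need only the binary coproduct $X\sqcup X$ and the binary product $Y\times Y$.

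For $X\in\mathcal C$ and $Y\in\mathcal F$, I would then prove the standard lemmas in the following order.
\begin{enumerate}
\item For cofibrant $X$, the two structure maps $X\to X\sqcup X$ are cofibrations, obtained as pushouts of $\bot\to X$. This needs an argument with coproducts only: the map $X\to X\sqcup X$ is the coproduct of $\operatorname{Id}_X$ with $\bot\to X$, and lifting against trivial fibrations is checked directly via the universal property of $\sqcup$.
\item The left and right homotopy relations agree on $\mathrm{Hom}(X,Y)$, and they form an equivalence relation there.
\item Homotopy is compatible with composition.
\end{enumerate}
This gives a quotient category $\pi\mathcal A_{cf}$ on cofibrant-fibrant objects. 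I would next extend it to a functor on all of $\mathcal A$. Each $X$ goes to $RQX$, using the fixed replacements $p_X$ and $i_X$ of Lemma~\ref{lem:model-replacements}. Morphisms are lifted using (CM3), and the lifts are well defined up to homotopy. Whitehead's theorem, that a weak equivalence between cofibrant-fibrant objects is a homotopy equivalence, shows that this functor inverts $\mathsf{Weq}$. It therefore factors through $\gamma$.

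The two claims of the theorem then follow. For fullness, I would take $X\in\mathcal C$ and $Y\in\mathcal F$ and write an arbitrary morphism $X\to Y$ of $\operatorname{Ho}(\mathcal A)$ as the composite
\[
X\xrightarrow{\gamma(i_X)}RX\to QY\xrightarrow{\gamma(p_Y)}Y
\]
modulo the isomorphisms $\gamma(i_X)$ and $\gamma(p_Y)$. Here $RX$ is cofibrant, $QY$ is fibrant, and the middle map comes from an honest map by the equivalence with $\pi\mathcal A_{cf}$. Lifting along the trivial cofibration $i_X$ into the fibrant $Y$, and along the trivial fibration $p_Y$ out of the cofibrant $X$ (again by (CM3)), gives an honest $f\colon X\to Y$ with the required image. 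For faithfulness: homotopic maps become equal because both ends of a cylinder are sections of the weak equivalence $X\wedge I\to X$. Conversely, if $\gamma f=\gamma g$, then the induced maps in $\pi\mathcal A_{cf}$ coincide, so $RQf\simeq RQg$. Transporting this back along $p_X$ and $i_Y$, using that precomposition with a trivial fibration between cofibrant objects is bijective on homotopy classes, gives $f\simeq g$.

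The main obstacle is the step that uses pushouts, which Egger's setting lacks. The cases are: the cofibrancy of $X\to X\sqcup X$, the proof that left homotopic maps into a fibrant object are right homotopic, and the proof that the homotopy relation is transitive. Quillen's transitivity argument glues two cylinders along $X$, which is a pushout. My first attempt would be to prove transitivity instead for right homotopy on a fibrant target, by building a path object for $Y^I\times_Y Y^I$. This is a pullback, so it is unavailable too. I would therefore replace it by lifting arguments that use only the product $Y\times Y\times Y$ and (CM3)/(CM4), as in \cite[Theorem~3.2]{EggerModelCategories}. If that fails, I would simply cite Egger's theorem for these lemmas.
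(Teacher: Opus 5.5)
Your proposal is correct in outline but takes a different route from the paper. The paper gives no argument of its own: it cites Quillen's Theorem~1 and Corollary~1 in the weakened form of Egger's Theorem~3.2, and reads both assertions off the full faithfulness of $\pi\mathcal A_{cf}\to\operatorname{Ho}(\mathcal A)$. You instead re-run Quillen's proof and check that only $X\sqcup X$, $Y\times Y$ and lifting are used.

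Your route buys two things. First, it is self-contained. Second, your explicit transport along $p_X$ and the replacement maps covers the mixed case $X\in\mathcal C$, $Y\in\mathcal F$ that the statement actually asserts. That case is used later, for instance for maps $QX\to RY$ and $C\to T$, and the paper's one-line deduction from full faithfulness on $\mathcal A_{cf}$ leaves it implicit.

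You also do not need the fallback to Egger that you keep in reserve. The pushout and pullback obstacles you list all disappear under a single lifting square.
\begin{enumerate}
\item Take a cylinder $X\sqcup X\xrightarrow{(i_0,i_1)}X\wedge I\xrightarrow{q}X$ with $(i_0,i_1)$ a cofibration. Then $i_0$ is a trivial cofibration for cofibrant $X$, by your item (1) and two-out-of-three.
\item Take a path object $Y\xrightarrow{s}Y^I\xrightarrow{(p_0,p_1)}Y\times Y$ with $(p_0,p_1)$ a fibration.
\item Let $K\colon X\to Y^I$ be a right homotopy from $f$ to $g$, and $H\colon X\wedge I\to Y$ a left homotopy from $g$ to $h$. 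The square with left side $i_0$, right side $(p_0,p_1)$, top $K$ and bottom $(f\circ q,H)$ commutes, since both composites equal $(f,g)$. For any lift $L$, the map $L\circ i_1$ is a right homotopy from $f$ to $h$.
\end{enumerate}
Taking $K=s\circ f$ gives exactly the left-implies-right argument, so that step never needed a pushout.

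The converse direction, for fibrant $Y$, is the dual lift of $(i_0,i_1)$ against $p_0$. Here $p_0$ is a trivial fibration because $\mathrm{pr}_0\colon Y\times Y\to Y$ is a fibration when $Y$ is fibrant, which you can check directly from the universal property of the product. Use top $(s\circ f,K)$ and bottom $f\circ q$; then $p_1\circ L$ is the required left homotopy. Together these give transitivity without glued cylinders, without $Y^I\times_Y Y^I$, and without $Y\times Y\times Y$.

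Two minor points remain. In the fullness step the extra lifts are superfluous, since $p_Y\circ m\circ i_X$ is already an honest map with the required image. In the faithfulness step the transport passes first through $i_{QY}$, which is a weak equivalence between cofibrant-fibrant objects and hence a homotopy equivalence. Only then does it pass through $p_X$, rather than through $i_Y$.
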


\begin{proof}
  This is Quillen's fundamental theorem on model structures \cite[Chapter~I, \S~1, Theorem~1 and Corollary~1, pp.~24--27]{Quillen}; here the weakened version that requires only the existence of finite products and finite coproducts is used \cite[Theorem~3.2]{EggerModelCategories}.
  Egger's comparison is induced by the inclusion of the cofibrant-fibrant objects followed by the localization functor.
  Its full faithfulness therefore identifies the Hom set in the localization with the set of homotopy classes of morphisms from $X$ to $Y$, which gives both assertions.
\end{proof}

\begin{lemma}\label{lem:product-model-structure}
  Let $I$ be an indexing set, and let $\bigl(\mathcal A_i,(\mathsf{Cofib}_i,\mathsf{Fib}_i,\mathsf{Weq}_i)\bigr)_{i\in I}$ be a family of model categories with finite products and finite coproducts.
  Then the product category $\prod_{i\in I}\mathcal A_i$ has the coordinatewise model structure
  \[
    \left(\prod_{i\in I}\mathsf{Cofib}_i,\,
    \prod_{i\in I}\mathsf{Fib}_i,\,
    \prod_{i\in I}\mathsf{Weq}_i\right).
  \]
  This model structure is called the product model structure \cite[Example~1.1.6, p.~4]{HoveyModelCategories}.
\end{lemma}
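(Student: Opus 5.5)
We verify the axioms coordinatewise, taking the product category $\prod_{i\in I}\mathcal A_i$ with morphisms $f=(f_i)_{i\in I}$ and composition computed componentwise. The three classes $\prod_i\mathsf{Cofib}_i$, $\prod_i\mathsf{Fib}_i$ and $\prod_i\mathsf{Weq}_i$ consist of the families all of whose components lie in the corresponding classes. They contain identities and are closed under composition because each $\mathsf{Cofib}_i$, $\mathsf{Fib}_i$, $\mathsf{Weq}_i$ has these properties. We also observe that $\prod_i\mathsf{TCofib}_i=\prod_i\mathsf{Cofib}_i\cap\prod_i\mathsf{Weq}_i$, and similarly for trivial fibrations. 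This lets us read each lifting condition componentwise.

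Next we check the standing hypothesis on finite products and finite coproducts. For a finite family of objects $X^{(1)},\dots,X^{(n)}$ of $\prod_i\mathcal A_i$, we take the coordinatewise coproducts $\bigl(\coprod_k X^{(k)}_i\bigr)_{i\in I}$ with coordinatewise structure maps. The universal property \eqref{eq_coprod} holds in the product category because its Hom sets are the products $\prod_i\mathrm{Hom}_{\mathcal A_i}$, and a product of bijections is a bijection. The empty case gives the initial object $(\bot_i)_{i\in I}$. Products are handled dually, using \eqref{eq_prod}.

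The axioms CM1--CM4 are then checked one at a time. For CM1, take a composable pair with two of $(f,g,g\circ f)$ in $\prod_i\mathsf{Weq}_i$. In each coordinate $i$, two of $(f_i,g_i,g_i\circ f_i)$ are weak equivalences, so CM1 in $\mathcal A_i$ gives the third. For CM2, a retract diagram in the arrow category of $\prod_i\mathcal A_i$ is the same as a family of retract diagrams in the $\mathcal A_i$, so closure under retracts holds coordinatewise. For CM3, a commutative square in the product splits into commutative squares in each $\mathcal A_i$ of the required type. We then choose a lift $h_i$ in each coordinate and set $h=(h_i)_{i\in I}$; it satisfies $h\circ i=a$ and $p\circ h=b$ componentwise. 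For CM4, we factor each $f_i$ as $p_i\circ j_i$ with $j_i\in\mathsf{Cofib}_i$ and $p_i\in\mathsf{TFib}_i$, and take $(p_i)\circ(j_i)$; the other factorization is analogous.

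The only point needing care is that CM3 and CM4 require infinitely many simultaneous choices, one lift or factorization per $i\in I$. Since $I$ is a small set, we invoke the axiom of choice (in the ambient universe) to pick these families at once. No compatibility between coordinates is needed, so no other obstacle arises; this agrees with \cite[Example~1.1.6]{HoveyModelCategories}.
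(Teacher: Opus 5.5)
Your proposal is correct and is essentially the intended argument: the paper gives no proof of its own for this lemma and simply defers to Hovey's Example~1.1.6, where the product model structure is obtained by this same coordinatewise verification, with lifts and factorizations chosen in each factor separately. Your explicit check of the standing finite (co)product hypothesis and of the paper's weakened axioms CM1--CM4 is a useful addition, since Hovey's definition (functorial factorizations, all small limits and colimits) is not literally the one used in the paper.
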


For ordinary model categories, the following comparison equivalence for discrete indexing categories is recorded in \cite[Section~2.5, equation~(2.7) and footnote~14, pp.~41--42]{RiehlHomotopicalCategories}.
The proof below also applies under the weakened finite-(co)product hypotheses of Definition~\ref{def:model-structure}.

\begin{theorem}\label{thm:model-category-application}
  Let $\mathcal A$ be a model category and $I$ a discrete indexing set.
  Let
  \[
    \gamma:\mathcal A\longrightarrow\operatorname{Ho}(\mathcal A),
    \qquad
    \gamma_I:\mathcal A^I\longrightarrow
    \operatorname{Ho}(\mathcal A^I)
    =(\mathcal A^I)[(\mathsf{Weq}^I)^{-1}].
  \]
  be the localizations.
  The canonical functor obtained by factoring $\gamma^I$ through $\gamma_I$,
  \[
    \Theta_I:
    \operatorname{Ho}(\mathcal A^I)
    \longrightarrow(\operatorname{Ho}(\mathcal A))^I,
    \qquad
    \Theta_I\circ\gamma_I=\gamma^I
  \]
  is an isomorphism of categories.
  Equivalently, for any two families of objects $X=(X_i)_{i\in I}$ and $Y=(Y_i)_{i\in I}$, there is a canonical bijection
  \[
    \operatorname{Hom}_{\operatorname{Ho}(\mathcal A^I)}
    (\gamma_IX,\gamma_IY)
    \xlongrightarrow{\sim}
    \prod_{i\in I}
    \operatorname{Hom}_{\operatorname{Ho}(\mathcal A)}
    (\gamma X_i,\gamma Y_i).
  \]
\end{theorem}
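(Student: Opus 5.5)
The plan is to verify the two quantitative hypotheses of Theorem~\ref{thm:product-canonical-fullness} and Proposition~\ref{prop:faithfulness-under-fullness} for the single pair $(\mathcal A,\mathsf{Weq})$, with bounds uniform in $i\in I$. Since all coordinates are identical, it suffices to find $N_0$ with $\ell(\mathfrak P)\le N_0$ for every $\mathfrak P\in\langle\mathsf{Mor}(\mathcal A)\sqcup\overline{\mathsf{Weq}}\rangle$, and then a uniform bound $D$ on $d(N_0+1)$. Fullness then follows from Theorem~\ref{thm:product-canonical-fullness} with $F=\varnothing$, and faithfulness from Proposition~\ref{prop:faithfulness-under-fullness}. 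Since $\Theta_I$ is bijective on objects, it is an isomorphism of categories. The displayed Hom-bijection is simply the restatement of full faithfulness.

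\textbf{Length bound.} Take $[\mathfrak P]:X\to Y$ and use the fixed replacements $p_X:QX\to X$ and $i_Y:Y\to RY$ from Lemma~\ref{lem:model-replacements}. The morphism $[i_Y]\circ[\mathfrak P]\circ[p_X]$ goes from a cofibrant object to a fibrant one. By Theorem~\ref{thm:quillen-homotopy-category} it equals $\gamma(f)$ for some $f:QX\to RY$. Hence
\[
  [\mathfrak P]=[\overline{i_Y}\ast f\ast\overline{p_X}],
\]
so $\ell(\mathfrak P)\le 3$, and we may take $N_0=3$.

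\textbf{Rewriting bound.} Let $[\mathfrak P]=[\mathfrak P']$ with $|\mathfrak P|,|\mathfrak P'|\le 4$. I would bring each side to a normal form $\overline{i_Y}\ast f\ast\overline{p_X}$ (respectively with $f'$) using a bounded number of rewrites. Then $\gamma(f)=\gamma(f')$ between a cofibrant and a fibrant object, so by Theorem~\ref{thm:quillen-homotopy-category} the maps $f$ and $f'$ are homotopic. Since $QX$ is cofibrant and $RY$ is fibrant, the left and right homotopy relations agree, so a single right homotopy through a path object can be used:
\[
  RY\xrightarrow{\,s\,}P\xrightarrow{(d_0,d_1)}RY\times RY,\qquad s\in\mathsf{Weq},\quad d_0\circ s=d_1\circ s=\operatorname{Id},\qquad f=d_0h,\ f'=d_1h.
\]
The key observation is that $d_0\circ s=d_1\circ s=\operatorname{Id}$ with $s\in\mathsf{Weq}$ gives
\[
  d_0 \longleftrightarrow d_0\ast s\ast\overline s\longleftrightarrow \operatorname{Id}\ast\overline s\longleftrightarrow d_1\ast s\ast\overline s\longleftrightarrow d_1,
\]
a chain of boundedly many elementary steps. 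Therefore $f=d_0\circ h\longleftrightarrow d_0\ast h$ rewrites to $d_1\ast h\longleftrightarrow f'$ in a bounded number of steps. The existence of the path object factorization uses only finite products and (CM4), which is compatible with Egger's hypotheses.

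\textbf{Main obstacle.} The hard part is showing that reaching the normal form takes boundedly many rewrites, independent of the data. This requires controlling how $f$ arises, because Theorem~\ref{thm:quillen-homotopy-category} only asserts existence and gives no rewriting chain. I would do this inductively on the arrows of $\mathfrak P$, adding one arrow $a$ at a time to a normal form $\overline{i_Z}\ast g\ast\overline{p_X}$.

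If $a$ is a forward arrow $a:Z\to W$, lift $i_W\circ a$ through the fibrant $RZ$ using (CM3) against the trivial cofibration $i_Z$; this gives $a'$ with $a'\circ i_Z=i_W\circ a$. The rewrite
\[
  a\ast\overline{i_Z}\longleftrightarrow\overline{i_W}\ast i_W\ast a\ast\overline{i_Z}\longleftrightarrow\overline{i_W}\ast a'\ast i_Z\ast\overline{i_Z}\longleftrightarrow\overline{i_W}\ast a'
\]
takes boundedly many steps, and then $a'\ast g$ composes to a single arrow.

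If $a=\overline w$ with $w\in\mathsf{Weq}$, first factor $w$ via (CM4) as a trivial cofibration followed by a trivial fibration, and treat the two pieces separately. For each piece, use lifting together with the cofibrancy of $QX$ to push the inverse past $g$. This again takes a bounded number of steps, but lifting against a trivial fibration is only available from cofibrant sources, so this case has to be worked out carefully, possibly inserting extra replacements $p$ on the source side.

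Once each arrow contributes a uniformly bounded number of steps, the resulting bound on $d(4)$ is independent of every choice, and the argument is complete.
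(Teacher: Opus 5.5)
\textbf{There is a gap at exactly the step you flag as the main obstacle.} Your overall architecture matches the paper's: $\ell(\mathfrak P)\le 3$ via $[\mathfrak P]=[\overline{i_Y}\ast f\ast\overline{p_X}]$, a bounded rewriting between homotopic maps $QX\to RY$, and then Proposition~\ref{prop:faithfulness-under-fullness} with $N_0=3$. Your path-object chain is a correct mirror of the paper's cylinder argument. The problem is that the case $a=\overline w$ of your induction is never carried out, and the mechanism you sketch does not work as stated.

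Factor $w\colon W\to Z$ as $w=q\circ u$ with $u\colon W\to Z'$ in $\mathsf{TCofib}$ and $q\colon Z'\to Z$ in $\mathsf{TFib}$. In $\overline w\ast\overline{i_Z}\ast g\ast\overline{p_X}$ the map $g$ lands in $RZ$, not in $Z$. So cofibrancy of $QX$ gives no lift of $g$ through $q$. The piece $\overline u$ also cannot be absorbed using cofibrancy; it needs a fibrancy argument. A small slip as well: in your forward case, the lift $a'\colon RZ\to RW$ exists because $RW$ is fibrant, not $RZ$.

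\textbf{The missing lemma} is the dual of the paper's \eqref{eq:model-weak-inverse-rewriting}, applied after transporting $w$ to the fibrant replacements.
\begin{enumerate}
  \item Lift against the trivial cofibration $i_W$, using that $RZ$ is fibrant, to get $w'\colon RW\to RZ$ with $w'\circ i_W=i_Z\circ w$. Then $w'\in\mathsf{Weq}$ by (CM1), and $\overline w\ast\overline{i_Z}$ rewrites to $\overline{i_W}\ast\overline{w'}$ in about ten elementary steps.
  \item Factor $w'=q\circ u$ with $u\colon RW\to Z''$ in $\mathsf{TCofib}$ and $q\colon Z''\to RZ$ in $\mathsf{TFib}$.
  \item Since $QX$ is cofibrant, lift $g$ through $q$ to get $\tilde g$ with $q\circ\tilde g=g$.
  \item Since $RW$ is fibrant, lift $\operatorname{Id}_{RW}$ along $u$ to get a retraction $r$ with $r\circ u=\operatorname{Id}_{RW}$.
  \item Then $\overline{w'}\ast g\longleftrightarrow\overline u\ast\tilde g\longleftrightarrow r\circ\tilde g$ in a bounded number of steps. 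This yields the new normal form $\overline{i_W}\ast(r\circ\tilde g)\ast\overline{p_X}$ with a uniform step count.
\end{enumerate}
With this lemma, your right-to-left induction does give a uniform bound on $d(4)$ and is a valid alternative to the paper's route.

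\textbf{How the paper handles it.} It goes the other way, closer to your remark about inserting replacements on the source side. It first moves every arrow to cofibrant replacements via the lifts $\widetilde f\colon QU\to QV$ of \eqref{eq:model-cofibrant-arrow-rewriting}. All remaining inverse arrows are then inverses of weak equivalences between cofibrant objects. It absorbs them from the left into a map to the single fixed fibrant object $RY$ using \eqref{eq:model-weak-inverse-rewriting}. That way only one fibrant target is ever needed, and no comparison maps between different fibrant replacements arise.
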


\begin{proof}
  Take a composition of arrows $\mathfrak P$ from $X$ to $Y$ in $\langle\mathsf{Mor}(\mathcal A)\sqcup\overline{\mathsf{Weq}}\rangle$.
  By the cofibrant replacement, the fibrant replacement, and Theorem~\ref{thm:quillen-homotopy-category}, there is an $h:QX\longrightarrow RY$ such that $\gamma(h)=\gamma(i_Y)\circ[\mathfrak P]\circ\gamma(p_X)$.
  Therefore $\mathfrak P$ is equivalent to the following composition of three arrows:
  \[
    \xymatrix@C=14mm{
    X&
    QX\ar[l]_{p_X}\ar[r]^h&
    RY&
    Y\ar[l]_{i_Y}
    },
    \qquad
  [\mathfrak P]=[\overline{i_Y}\ast h\ast\overline{p_X}],
    \qquad
    \ell(\mathfrak P)\leq3.
  \]
  Taking $F=\varnothing$ and $N=3$ in Theorem~\ref{thm:product-canonical-fullness}, one concludes that $\Theta_I$ is full.

  \medskip

  By Proposition~\ref{prop:faithfulness-under-fullness}, it remains only to verify that $d_i(4)$ has a uniform upper bound.
  The notation $\xlongleftrightarrow{\leq m}$ continues to denote a sequence of at most $m$ elementary rewriting steps from \eqref{eq:gz-arrow-composition-relations}.
  First let $C \in \mathcal{C}$ be cofibrant, let $T \in \mathcal{F}$ be fibrant, and let $h_0,h_1:C\longrightarrow T$ satisfy $\gamma(h_0)=\gamma(h_1)$.
  By Theorem~\ref{thm:quillen-homotopy-category} and \cite[Lemma~4.6]{DwyerSpalinski}, there is a commutative diagram
  \[
    \xymatrix@C=12mm@R=7mm{
    C\amalg C\ar[r]^{(h_0, h_1)}\ar[d]_{(\operatorname{Id}_C, \operatorname{Id}_C)} \ar@{..>}[dr]^{(i_0, i_1)} &T\\
    C&\operatorname{Cyl}(C)\ar[u]_H\ar[l]_p
    }
  \]
  where $(i_0,i_1)\in\mathsf{Cofib}$ and $p\in \mathsf{TFib}$.
  Hence
  \[
    i_\varepsilon
    \xlongleftrightarrow{\leq2}
    \overline p\ast p\ast i_\varepsilon
    \xlongleftrightarrow{\leq2}\overline p,
    \qquad
    h_\varepsilon
    \longleftrightarrow H\ast i_\varepsilon
    \xlongleftrightarrow{\leq4}H\ast\overline p.
  \]
  Therefore
  \begin{equation}\label{eq:model-homotopy-rewriting}
    \delta(h_0,h_1)\leq10.
  \end{equation}

  \medskip

  Next take $C,D \in \mathcal{C}$, a weak equivalence $s:C\longrightarrow D$, an object $T \in \mathcal{F}$, and a morphism $h:C\longrightarrow T$.
  By (CM4) and (CM1), choose
  \[
    s=p\circ u,\qquad C\xlongrightarrow{u}Z\xlongrightarrow{p}D,\qquad u\in \mathsf{TCofib},\quad p\in \mathsf{TFib}.
  \]
  The following two lifting squares give $v:Z\longrightarrow T$ and $r:D\longrightarrow Z$:
  \[
    \xymatrix@C=12mm@R=7mm{
    C\ar[r]^h\ar[d]_u&T\ar[d]\\
    Z\ar[r]\ar@{..>}[ur]^v&\top
    }
    \qquad
    \xymatrix@C=12mm@R=7mm{
    \bot\ar[r]\ar[d]&Z\ar[d]^p\\
    D\ar[r]_{\operatorname{Id}_D}\ar@{..>}[ur]^r&D
    },
    \qquad
    v\circ u=h,\quad p\circ r=\operatorname{Id}_D.
  \]
  Therefore the following rewritings hold:
  \begin{equation}\label{eq:model-weak-inverse-rewriting}
    h\ast\overline s
    =(v\circ u)\ast\overline{p\circ u}
    \xlongleftrightarrow{\leq6}
    v\ast\overline p
    \xlongleftrightarrow{\leq5}v\circ r.
  \end{equation}

  \medskip

  For every morphism $f:U\longrightarrow V$, the lifting axiom gives a morphism $\widetilde f:QU\longrightarrow QV$:
  \[
    \xymatrix@C=16mm@R=7mm{
    \bot\ar[r]\ar[d]&QV\ar[d]^{p_V}\\
    QU\ar[r]_{f\circ p_U}
    \ar@{..>}[ur]^{\widetilde f}&V
    },
    \qquad
    p_V\circ\widetilde f=f\circ p_U.
  \]
  This commutative square gives the first rewriting below for every $f\in\mathsf{Mor}(\mathcal A)$.
  If in addition $f\in\mathsf{Weq}$, then $\widetilde f\in\mathsf{Weq}$, and let $\overline f:V\longrightarrow U$ and $\overline{\widetilde f}:QV\longrightarrow QU$ denote the inverse arrows in the localization corresponding to $f$ and $\widetilde f$, respectively; in this case one also has the second rewriting:
  \begin{equation}\label{eq:model-cofibrant-arrow-rewriting}
    f\xlongleftrightarrow{\leq4} p_V\ast\widetilde f\ast\overline{p_U}
    \quad\bigl(f\in\mathsf{Mor}(\mathcal A)\bigr), \qquad
    \overline f\xlongleftrightarrow{\leq8}
    p_U\ast\overline{\widetilde f}\ast\overline{p_V}
    \quad\bigl(f\in\mathsf{Weq}\bigr).
  \end{equation}

  Now take $\mathfrak P=a_m\ast\cdots\ast a_1 \in\langle\mathsf{Mor}(\mathcal A)\sqcup\overline{\mathsf{Weq}}\rangle$, where $1\leq m\leq4$, $a_k:X_{k-1}\longrightarrow X_k$, $X_0=X$, and $X_m=Y$.
  Let $p_k=p_{X_k}$.
  Applying \eqref{eq:model-cofibrant-arrow-rewriting} to each $a_k$ gives
  \[
    \begin{aligned}
      \mathfrak P
       & \xlongleftrightarrow{\leq8m}
      (p_m\ast b_m\ast\overline{p_{m-1}})
      \ast\cdots\ast(p_1\ast b_1\ast\overline{p_0})   \\
       & \xlongleftrightarrow{\leq2(m-1)}
      p_m\ast b_m\ast\cdots\ast b_1\ast\overline{p_0} \\
       & \xlongleftrightarrow{\leq3}
      \overline{i_Y}\ast(i_Y\circ p_m)
      \ast b_m\ast\cdots\ast b_1\ast\overline{p_0}.
    \end{aligned}
  \]
  where $b_k$ is an arrow from $QX_{k-1}$ to $QX_k$: it is either a morphism in $\mathcal A$ or the inverse arrow corresponding to a weak equivalence.
  Starting with $(i_Y\circ p_m)\ast b_m$, process the arrows $b_k$ successively from left to right: compose once for a forward arrow, and apply \eqref{eq:model-weak-inverse-rewriting} for an inverse arrow.
  In at most $11m$ steps, this gives a morphism $h:QX\longrightarrow RY$.
  Therefore
  \begin{equation}\label{eq:model-three-arrow-rewriting}
    \delta\bigl(\mathfrak P,
    \overline{i_Y}\ast h\ast\overline{p_X}\bigr)
    \leq8m+2(m-1)+3+11m\leq85.
  \end{equation}
  If $\mathfrak Q\in \langle\mathsf{Mor}(\mathcal A)\sqcup\overline{\mathsf{Weq}}\rangle$ satisfies $[\mathfrak P]=[\mathfrak Q]$ and $|\mathfrak Q|\leq4$, use the same cofibrant replacement $p_X$ and fibrant replacement $i_Y$ for $\mathfrak Q$ to obtain a morphism $h':QX\longrightarrow RY$.
  Then
  \[
    [h]=\gamma(i_Y)\circ[\mathfrak P]\circ\gamma(p_X)
    =\gamma(i_Y)\circ[\mathfrak Q]\circ\gamma(p_X)=[h'].
  \]
  Combining \eqref{eq:model-homotopy-rewriting} and \eqref{eq:model-three-arrow-rewriting}, one obtains
  \[
    \delta(\mathfrak P,\mathfrak Q)\leq85+10+85=180,
    \qquad d_i(4)\leq180\quad(i\in I).
  \]
  Taking $N_0=3$, $F_0=F=\varnothing$, and $D=180$ in Proposition~\ref{prop:faithfulness-under-fullness}, one concludes that $\Theta_I$ is faithful.
  Since $\Theta_I$ is bijective on objects, it is an isomorphism of categories.
\end{proof}

\begin{corollary}\label{cor:model-category-preservation}
  Let $(\mathcal A, (\mathsf{Cofib}, \mathsf{Fib}, \mathsf{Weq}))$ be a model category.
  Let $\gamma:\mathcal A\longrightarrow\operatorname{Ho}(\mathcal A)$ be the localization.
  Fix a set $I$.
  \begin{enumerate}[label=\textup{(\arabic*)}]
    \item If $\mathcal A$ has $I$-indexed coproducts and $\mathsf{Weq}$ is closed under these coproducts, then $\operatorname{Ho}(\mathcal A)$ has $I$-indexed coproducts and $\gamma$ preserves them;
    \item If $\mathcal A$ has $I$-indexed products and $\mathsf{Weq}$ is closed under these products, then $\operatorname{Ho}(\mathcal A)$ has $I$-indexed products and $\gamma$ preserves them.
  \end{enumerate}
  In particular, if $\mathcal A$ has all coproducts (products) and $\mathsf{Weq}$ is closed under them, then $\gamma$ preserves all coproducts (products).
\end{corollary}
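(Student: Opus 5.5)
The plan is to deduce the corollary directly from Theorem~\ref{thm:model-category-application} combined with the adjunction-descent argument used in Theorem~\ref{cor:fraction-calculus-preservation}; alternatively, to verify the two quantitative hypotheses of Theorem~\ref{cor:fraction-calculus-preservation}, which the proof of Theorem~\ref{thm:model-category-application} already supplies. We may assume $\mathsf{Weq}$ contains all identities, since every weak equivalence class does (CM1 with identities, or by definition of a model structure in Definition~\ref{def:model-structure}).

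\textbf{Step 1: the bounds.} For $\mathcal A$ nonempty (it is nonempty, since it has a terminal object), the proof of Theorem~\ref{thm:model-category-application} shows that every $\mathfrak P\in\langle\mathsf{Mor}(\mathcal A)\sqcup\overline{\mathsf{Weq}}\rangle$ satisfies $\ell(\mathfrak P)\leq 3$. Hence the supremum $N$ in Theorem~\ref{cor:fraction-calculus-preservation} is at most $3$. One small point must be checked here: the second condition there is phrased with $N+1$ for the actual supremum $N$, which may be smaller than $3$. Since $d(L)$ is monotone in $L$, it suffices to bound $d(4)$, and the same proof gives $\delta(\mathfrak P,\mathfrak Q)\leq 180$ whenever $[\mathfrak P]=[\mathfrak Q]$ and $|\mathfrak P|,|\mathfrak Q|\leq4$. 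Both hypotheses of Theorem~\ref{cor:fraction-calculus-preservation} therefore hold with $S=\mathsf{Weq}$.

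\textbf{Step 2: apply the preservation theorem.} Part (1) then follows from Theorem~\ref{cor:fraction-calculus-preservation}(1), using that $\mathcal A$ has $I$-indexed coproducts and $\mathsf{Weq}$ is closed under them. Part (2) follows from Theorem~\ref{cor:fraction-calculus-preservation}(2). Equivalently, one can argue directly: $\coprod_I\dashv\Delta^I$ sends $\mathsf{Weq}^I$ to $\mathsf{Weq}$ and $\mathsf{Weq}$ to $\mathsf{Weq}^I$, so it descends to an adjunction between $\operatorname{Ho}(\mathcal A^I)$ and $\operatorname{Ho}(\mathcal A)$. The isomorphism $\Theta_I$ of Theorem~\ref{thm:model-category-application} identifies $\operatorname{Ho}(\mathcal A^I)$ with $(\operatorname{Ho}\mathcal A)^I$ compatibly with the diagonal. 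The descended left adjoint is then an $I$-indexed coproduct functor, and $\gamma\circ\coprod_I=\widehat{\coprod_I}\circ\gamma^I$ gives preservation. The product case is dual, or follows by the identical argument with $\Delta^I\dashv\prod_I$.

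\textbf{Step 3: the final clause.} The ``in particular'' statement follows by applying (1) or (2) to every small set $I$.

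I expect essentially no obstacle. The only care needed is in Step 1: one must confirm that the rewriting bound obtained in Theorem~\ref{thm:model-category-application} is uniform in $\mathcal A$ alone, which it is, since there the index set plays no role. One must also make sure the monotonicity remark handles the case where the true supremum $N$ is less than $3$.
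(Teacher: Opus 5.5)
Your proposal is correct and follows the paper's proof, which simply takes $S=\mathsf{Weq}$ in Theorem~\ref{cor:fraction-calculus-preservation} and quotes the bounds $\ell(\mathfrak P)\leq 3$ and $d_i(4)\leq 180$ from the proof of Theorem~\ref{thm:model-category-application}. Your monotonicity remark ($d(N+1)\leq d(4)$ when the actual supremum $N$ is below $3$) supplies a small detail the paper leaves implicit. Your alternative direct argument, via $\Theta_I$ and the descended adjunction, is just the proof of Theorem~\ref{cor:fraction-calculus-preservation} written out.
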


\begin{proof}
  Take $S=\mathsf{Weq}$ and $Q=\gamma$ in Theorem~\ref{cor:fraction-calculus-preservation}.
  The preceding proof gives $\ell(\mathfrak P)\leq3$ and $d_i(4)\leq180$, so the two conditions in Theorem~\ref{cor:fraction-calculus-preservation} hold, and the stated conclusions about $I$-indexed (co)products and their preservation follow.
\end{proof}

\section{Conjectures and further questions}\label{sec:conjectures-and-problems}
\begin{conjecture}\label{conj:nonautomatic-additivity}
  There exist an additive category $\mathcal A$ and a class of morphisms $S$ such that both $\mathcal A$ and $\mathcal A[S^{-1}]$ are additive categories, while the localization functor $Q:\mathcal A\longrightarrow\mathcal A[S^{-1}]$ is not an additive functor.
\end{conjecture}

There is in fact an example in which $\mathcal A$ is preadditive and the ordinary category $\mathcal A[S^{-1}]$ can be equipped with a preadditive structure for which $Q$ does not preserve addition of the Hom sets.

\begin{example}\label{ex:preadditive-nonadditive-localization}
  Let $\mathcal A$ be the one-object category associated with the algebra $\mathbb F_5\times\mathbb F_2$: the category $\mathcal A$ has a single object $*$ and $\operatorname{End}_{\mathcal A}(*)=\mathbb F_5\times\mathbb F_2$.
  Composition and addition of morphisms are given by multiplication and addition in this algebra, respectively.
  Let $e=(1,0)$ and take $S=\{e\}$.
  Then $\mathcal A[S^{-1}]$ can be equipped with a preadditive structure for which the localization functor $Q:\mathcal A\longrightarrow \mathcal{A}[S^{-1}]$ does not preserve addition.
\end{example}

\begin{proof}
  Let $\mathcal B$ be the one-object category associated with the algebra $\mathbb F_5$.
  Define a functor
  \[
    F:\mathcal A\longrightarrow\mathcal B,
    \qquad F(*)=*,\qquad F(a,b)=a^3.
  \]
  Since $(ac)^3=a^3c^3$, $F$ is a functor, and $F(e)=1$.
  The universal property of localization therefore gives a unique functor
  \[
    \widetilde F:\mathcal A[S^{-1}]\longrightarrow\mathcal B,
    \qquad
    \widetilde F\circ Q=F.
  \]
  By construction, $Q((1,0))$ is an isomorphism.
  It is straightforward to verify that the following assignment satisfies the identity and composition laws and therefore defines a functor:
  \[
    G:\mathcal B\longrightarrow\mathcal A[S^{-1}],
    \qquad
    G(*)=Q(*),\qquad G(a)=Q(a^3,0).
  \]
  Then $\widetilde F\bigl(G(a)\bigr) =F(a^3,0)=a^9=a$.
  Hence $\widetilde F\circ G=\operatorname{Id}_{\mathcal B}$.
  On the other hand, $Q(a,b)=Q(e)\circ Q(a,b)=Q(a,0)$, and therefore
  \[
    G\bigl(\widetilde F(Q(a,b))\bigr) =G(a^3)=Q(a^9,0)=Q(a,0)=Q(a,b).
  \]
  Thus $(G\circ\widetilde F)\circ Q=Q$.
  By uniqueness in the universal property of localization, $G\circ\widetilde F =\operatorname{Id}_{\mathcal A[S^{-1}]}$.
  Hence $\widetilde F$ is an isomorphism of categories.
  Transport the addition of $\mathcal B$ along this isomorphism: for morphisms $u,v$ in the unique Hom set of $\mathcal A[S^{-1}]$, define
  \[
    u+_{S}v
    =G\bigl(\widetilde F(u)+\widetilde F(v)\bigr).
  \]
  With this addition, $\mathcal A[S^{-1}]$ is preadditive and $\widetilde F$ is an additive isomorphism.

  \medskip

  It remains to prove that $Q$ is not additive.
  If $Q$ preserved addition, then $F=\widetilde F\circ Q$ would also preserve addition.
  But
  \[
    F(e+e)=F(2,0)=2^3=3
    \neq2=1+1=F(e)+F(e) \qquad \text{in $\mathbb F_5$}
  \]
  Thus $Q$ does not preserve addition.
\end{proof}

For a set $I$, the product category $\mathcal{A}^I$ is precisely the functor category $\mathrm{Funct}(I, \mathcal{A})$.
What happens for a general indexing category $J$?

\begin{problem}\label{prob:functor-localization}
Let $J$ be a category.
Let $S^J$ be the class of all natural transformations in $\mathcal A^J$ that belong pointwise to $S$.
When is the canonical functor induced by localization
\[
  \Theta_{J}:(\mathcal A^{J})[(S^{J})^{-1}]\longrightarrow(\mathcal A[S^{-1}])^{J}
\]
an equivalence of categories, and when is it essentially surjective, full, or faithful, respectively?
\end{problem}

This property already fails in general for arrow categories.

\begin{example}\label{ex:morphism-category-localization}
  Let $\{a,s\}^*$ be the free monoid generated by the letters $a$ and $s$, and let $\mathcal A$ be the corresponding one-object category.
  Let $S=\{s^n\mid n\geq0\}$, and let $[1]$ be the category generated by the arrow $0\longrightarrow1$, so that $\mathcal A^{[1]}$ is the arrow category of $\mathcal A$.
  Then the canonical functor induced by localization
  \[
    \Theta_{[1]}:
    (\mathcal A^{[1]})[(S^{[1]})^{-1}]
    \longrightarrow
    (\mathcal A[S^{-1}])^{[1]}
  \]
  is not essentially surjective and hence is not an equivalence of categories.
\end{example}

\begin{proof}
  The localization $\mathcal A[S^{-1}]$ still has a single object, and its endomorphism monoid is $M=\langle a,s,\overline s\mid s\overline s=1=\overline s s\rangle \cong\mathbb N*\mathbb Z$.

  Observe that $(\mathcal A^{[1]})[(S^{[1]})^{-1}]$ and $\mathcal A^{[1]}$ have the same objects, namely all morphisms of $\mathcal{A}$.
  Take any object $m\in\{a,s\}^*$ of $(\mathcal A^{[1]})[(S^{[1]})^{-1}]$; the functor $\Theta_{[1]}$ sends it to the object $[m]$.
  A functor that distinguishes $[a\overline s a]$ from $[m]$ will now be constructed.
  Take
  \[
    V=\mathbb Q^2, \qquad T=\begin{psmallmatrix}1&1\\0&1\end{psmallmatrix}, \quad P=\begin{psmallmatrix}0&0\\1&1\end{psmallmatrix}.
  \]
  Then for every $n\in\mathbb Z$ one has $T^n=\begin{psmallmatrix}1&n\\0&1\end{psmallmatrix}$ and $PT^nP=(n+1)P$.
  Define a functor
  \[
    F:\mathcal A\longrightarrow\operatorname{Vect}_{\mathbb Q}, \qquad F(*)=V,
    \qquad F(a)=P,
    \quad F(s)=T.
  \]
  Every $F(s^n)=T^n$ is invertible, so the universal property of localization induces a functor $H:\mathcal A[S^{-1}]\longrightarrow \operatorname{Vect}_{\mathbb Q}$.
  In particular,
  \[
    H([a])=P,
    \qquad H([s])=T,
    \qquad H([\overline s])=T^{-1}.
  \]
  Taking $n=-1$ gives $H([a\overline s a])=PT^{-1}P=0$.

  Take any $m\in\{a,s\}^*$.
  If $m=s^r$ with $r\geq0$, then $H([m])=T^r\neq0$.
  If $m$ contains $t\geq1$ copies of the letter $a$, then there exist $n_0,\ldots,n_t\in\mathbb N$ such that $m=s^{n_0}a s^{n_1}a\cdots a s^{n_t}$.
  Repeatedly applying $PT^{n_j}P=(n_j+1)P$ gives
  \[
    H([m])
    =T^{n_0}PT^{n_1}P\cdots PT^{n_t}
    =\bigg(\prod_{j=1}^{t-1}(n_j+1)\bigg)
    T^{n_0}PT^{n_t}\neq0.
  \]
  When $t=1$, the empty product here is $1$.
  Thus $H([m])$ is nonzero for every $m\in\{a,s\}^*$.
  If $[a\overline s a]\cong[m]$, applying the functor
  \[
    H^{[1]}:(\mathcal A[S^{-1}])^{[1]}
    \longrightarrow(\operatorname{Vect}_{\mathbb Q})^{[1]},
  \]
  to this isomorphism would give an isomorphism between the zero morphism $0:V\longrightarrow V$ and the nonzero morphism $H([m]):V\longrightarrow V$.
  This is a contradiction.
  Therefore $[a\overline s a]$ is not isomorphic to any object in the image of $\Theta_{[1]}$, so $\Theta_{[1]}$ is not essentially surjective.
\end{proof}

\section*{Use of AI}
The manuscript was completed several months before the AI-assisted checks described here.
GPT-5.5 sol was used to locate references and check mathematical statements.
GPT-6 astra was also used in an attempt to resolve Conjecture~7.1, but this attempt yielded no progress.

\end{document}